\theoremstyle{plain}
\newtheorem{thm}{Theorem}
\newtheorem{lemma}{Lemma}
\newtheorem{prop}{Proposition}
\newtheorem{cor}{Corollary}
\newtheorem*{thm:stronglim}{Theorem \ref{stronglim}}
\newtheorem*{thm:dg}{Theorem \ref{thmDG}}
\newtheorem*{prop:hill}{Proposition \ref{perplim}}
\newtheorem*{cor:graded-modules}{Corollary \ref{graded-modules}}
\newtheorem*{thm:concretemodelexample}{Theorem \ref{concretemodelexample}}
\newtheorem*{prop:sheaves}{Proposition \ref{sheaves}}
\newtheorem*{thm:sheaves-noetherian}{Theorem \ref{sheaves-noetherian}}
\theoremstyle{definition}
\newtheorem{defn}{Definition}
\newtheorem{ex}{Example}
\theoremstyle{remark}
\newtheorem{remark}{Remark}
\newtheorem{set}{Setup}
\newcommand{\Pref}[1]{Proposition~\ref{#1}}
\newcommand{\Lref}[1]{Lemma~\ref{#1}}
\newcommand{\Tref}[1]{Theorem~\ref{#1}}
\newcommand{\Coref}[1]{Corollary~\ref{#1}}
\newcommand{\Dref}[1]{Definition~\ref{#1}}
\newcommand{\Rref}[1]{Remark~\ref{#1}}
\newcommand{\Xref}[1]{Example~\ref{#1}}
\newcommand{\Sref}[1]{Setup~\ref{#1}}
\DeclareMathOperator{\Ext}{Ext}
\DeclareMathOperator{\Id}{Id}
\DeclareMathOperator{\Ima}{Im}
\DeclareMathOperator{\FP}{FP}
\DeclareMathOperator{\Ab}{Ab}
\DeclareMathOperator{\Ch}{Ch}
\DeclareMathOperator{\QCoh}{QCoh}
\DeclareMathOperator{\Mod}{Mod}
\newcommand{\dlim}{\varinjlim}
\newcommand{\sla}{\hspace{0.5pt}/\hspace{0.5pt}}
\newcommand{\Z}{\mathbb Z}
\newcommand{\A}{\mathcal A}
\newcommand{\B}{\mathcal B}
\renewcommand{\P}{\mathcal P}
\newcommand{\E}{\mathcal E}
\newcommand{\X}{\mathcal X}
\let\sectionsymbol\S
\renewcommand{\S}{\mathcal S}
\newcommand{\C}{\mathscr C}
\newcommand{\eps}{\varepsilon}
\renewcommand{\phi}{\varphi}
\renewcommand{\subset}{\subseteq}
\newcommand{\fp}[1]{\mathrm{fp}(#1)}
\begin{document}
\title{Dualizable and semi-flat objects in abstract module categories}
\author{Rune Harder Bak}
\address{Department of Mathematical Sciences, University of Copenhagen, Universitetsparken~5, 2100 Copenhagen {\O}, Denmark} 
\email{bak@math.ku.dk}
\keywords{Cotorsion pairs; differential graded algebras and modules; direct limit closure, dualizable objects; locally finitely presented categories; semi-flat objects.}
\subjclass[2010]{Primary 18E15. Secondary 16E45; 18G35.}

\begin{abstract}
In this paper, we define what it means for an object in an abstract module category to be dualizable and we give a homological description of the direct limit closure of the dualizable objects. Our  description recovers existing results of Govorov and Lazard, Oberst and R{\"o}hrl, and Christensen and Holm. When applied to differential graded modules over a differential graded algebra, our description yields that a DG-module is semi-flat if and only if it can be obtained as a direct limit of finitely generated semi-free DG-modules. We obtain similar results for graded modules over graded rings and for quasi-coherent sheaves over nice schemes.
\end{abstract}

\maketitle

\section{Introduction}
In the literature, one can find several results that describe how some kind of ``flat object'' in a suitable category can be obtained as a direct limit of simpler objects. Some examples are:
\begin{enumerate}
\item In 1968 Lazard \cite{lazard68}, and independently Govorov \cite{gov65} proved that over any ring, a module is flat if and only if it is a direct limit of finitely generated projective modules.
\item 
In 1970 Oberst and R{\"o}hrl \cite[Thm 3.2]{oberst70} proved that an additive functor on a small additive category is flat if and only if it is a direct limit of representable functors.
\item 
In 2014 Christensen and Holm \cite{holm14} proved that over any ring, a complex of modules is semi-flat if and only if it is a direct limit of perfect complexes (= bounded complexes of finitely generated projective modules).
\item 
In 1994 Crawley-Boevey \cite{boevey94} proved that over certain schemes, a quasi-co\-he\-rent sheaf is locally flat if and only if it is a direct limit of locally free sheaves of finite rank. In 2014 Brandenburg \cite{brandenburg14} defined another notion of flatness and proved one direction for more general schemes.

\end{enumerate}

In Section~\ref{Abstract-module-categories} we provide a categorical framework that makes it possible to study results and questions like the ones mentioned above. It is this framework that the term ``abstract module categories'' in the title refers to. From a suitably nice (axiomatically described) class $\S$ of objects in such an abstract module category $\C$, we define a notion of semi-flatness (with respect to $\S$). This definition depends only on an abstract tensor product, which is built into the aforementioned framework, and on a certain homological condition. We write $\dlim\S$ for the class of objects in $\C$ that can be obtained as a direct limit of objects from $\S$. Our main result shows that under suitable assumptions, $\dlim\S$ is precisely the class of semi-flat objects:

\begin{thm:stronglim}
Let $\C$ and $\S$ be as in \Sref{setup1} and \Sref{setup2}. In this case, an object in $\C$ is semi-flat if and only if it belongs to $\dlim\S$.
\end{thm:stronglim}

The proof of this theorem is a generalization of the proof of \cite[Thm.~1.1]{holm14}, which in turn is modelled on the proof of \cite[Chap.~I, Thm.~1.2]{lazard68}. A central new ingredient in the proof of \Tref{stronglim} is an application of the generalized Hill Lemma by Stovicek \cite[Thm 2.1]{stovicek13}.

The abstract module categories treated in Section~\ref{Abstract-module-categories} encompass more ``concrete'' module categories such as the category ${}_A\C\sla\C_A$ of left/right modules over a monoid (= ring object) $A$ in a closed symmetric monoidal abelian category $(\C_0,\otimes_1,1,[-,-])$; see
Pareigis~\cite{pareigis86}. In this setting, \Tref{stronglim} takes the form:

\begin{thm:concretemodelexample}
  Let $A$ be a monoid in a closed symmetric monoidal Grothendieck category $(\C_0,\otimes_1,[-,-],1)$ and let ${}_A\C\sla\C_A$ be the category of left/right $A$-modules. Let ${}_A\S$ be (a suitable subset of, e.g.~all) the \emph{dualizable} objects in ${}_A\C$. If $\C_0$ is generated by dualizable objects and $1$ is $\FP_2$, then the direct limit closure of ${}_A\S$ is precisely the class of semi-flat objects in ${}_A\C$.
\end{thm:concretemodelexample}

Dualizable objects in symmetric monoidal categories were defined and studied by Lewis and May in \cite[III\sectionsymbol 1]{equiva} and investigated further by Hovey, Palmieri, and Strickland in \cite{HPS97}; we extend the definition and the theory of such objects to categories of $A$-modules (see Definition \ref{finitedef}).

In the final Section~\ref{Examples}, we specialize our setup even further. For some choices of a closed symmetric monoidal abelian category $\C_0$ and of a monoid $A \in \C_0$, the category of $A$-modules turn out to be a well-known category in which the dualizable and the semi-flat objects admit hands-on descriptions. When applied to differential graded modules over a DGA, to graded modules over a graded ring, and to sheaves over a scheme, Theorem~\ref{concretemodelexample} yields the following results, which all seem to be new.

\begin{thm:dg}
Let $\S$ be the class of finitely generated semi-free/semi-projective diffe\-rential graded modules over a differential graded algebra $A$.
The direct limit closure of $\S$ is precisely the class of semi-flat (or DG-flat) differential graded $A$-modules.
\end{thm:dg}

\begin{cor:graded-modules}
    Over any $\mathbb{Z}$-graded ring, the direct limit closure of the finitely generated projective (or free) graded modules is precisely the class of
    flat graded modules.
\end{cor:graded-modules}

\begin{thm:sheaves-noetherian}
  Let $X$ be a noetherian scheme with the strong resolution property. In the category $\QCoh(X)$, the direct limit closure of the locally free sheaves of finite rank is precisely the class of semi-flat sheaves.
\end{thm:sheaves-noetherian}

In the same vein, it follows that the results (1)--(3), mentioned in the beginning of the Introduction, are also consequences of Theorems~\ref{stronglim} and \ref{concretemodelexample}. 

\section{Preliminaries}

\subsection{Locally finitely presented categories}\label{locpres}

We need some facts about locally finitely presented categories from Breitsprecher \cite{breit79}. 
Let $\C$ be a category. First recall:

\begin{defn}
  \label{defn:generate}
  A collection of objects $\S$ is said to \emph{generate} $\C$ if 
  given different maps $f,g\colon A\to B$ there exists a map $\sigma\colon S\to A$ with $S\in\S$ such that $f\sigma$ and $g\sigma$ are different.
  If $\C$ is abelian, this simply means that if $A\to B$ is non-zero there is some $S\to A$ with $S\in\S$ such that $S\to A\to B$ is non-zero.
\end{defn}

\begin{defn}
  \label{defn:lfp-cat}
  An object $K\in\C$ is called \emph{finitely presented} if $\C(K,-)$ commutes with filtered colimits. Denote by $\fp{\C}$ the collection of all finitely presented objects in $\C$.
  A Grothendieck category is called \emph{locally finitely presented} if it is generated by a small set (as opposed to a class) of finitely presented objects.
\end{defn}

\begin{remark}
  \label{lfp-eqc}
 By \cite[SATZ 1.5]{breit79} a Grothendieck category is locally finitely presented if and only if $\dlim \fp{\C} = \C$, and by \cite[(2.4)]{boevey94} this is equivalent to saying that $\C$ is abelian, $\fp{\C}$ is small, and $\dlim \fp{\C} = \C$.
\end{remark}

\begin{prop} \label{fppresentation}
  Let $\C$ be a Grothendieck category. Then
  \begin{enumerate}

\item \cite[SATZ 1.11]{breit79}   If $\S$ is a set of finitely presented objects generating $\C$, then $N\in\C$ is
finitely presented iff it has a presentation 
$$ 
\xymatrix{
X_0 \ar[r] & X_1 \ar[r] & N \ar[r] & 0
}
$$
where $X_0,X_1$ are finite sums of elements of $\S.$
\item\cite[SATZ 1.9]{breit79} The finitely presented objects are closed under extensions.
  \end{enumerate}
\end{prop}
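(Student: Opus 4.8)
The plan is to treat the two assertions as the standard facts about locally finitely presented categories that they are, and to give direct arguments rather than merely quoting \cite{breit79}. The ``if'' direction of (1) is formal and I would dispose of it first. A finite sum of objects of $\S$ is finitely presented, since $\C(\bigoplus_{i=1}^n S_i,-)\cong\prod_{i=1}^n\C(S_i,-)$ and a finite product of functors preserving filtered colimits again preserves them. Given a presentation $X_0\to X_1\to N\to 0$ with $X_0,X_1$ finite sums of elements of $\S$, I would apply $\C(-,M)$ to obtain, for every $M$, a left exact sequence $0\to\C(N,M)\to\C(X_1,M)\to\C(X_0,M)$. Thus $\C(N,-)$ is a kernel of a natural transformation of functors preserving filtered colimits; since filtered colimits are exact in $\Ab$ and hence commute with kernels, $\C(N,-)$ preserves filtered colimits, i.e.\ $N$ is finitely presented.

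For the ``only if'' direction of (1), and for all of (2), the crux is the following finiteness lemma, which I will call the \emph{engine}: if $0\to K\to L\to N\to 0$ is exact with $L$ finitely generated and $N$ finitely presented, then $K$ is finitely generated. This is the step I expect to be the main obstacle. To prove it, I would write $K=\bigcup_j K_j$ as the directed union of its finitely generated subobjects (possible because a locally finitely presented category is locally finitely generated). By AB5 one has $\colim_j L/K_j=L/K=N$, with structure maps $\pi_j\colon L/K_j\to N$. Since $N$ is finitely presented, $\mathrm{id}_N$ lifts through some stage: there is $s\colon N\to L/K_{j_0}$ with $\pi_{j_0}s=\mathrm{id}_N$. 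Writing $q\colon L\to N$ and $q_{j_0}\colon L\to L/K_{j_0}$ for the quotients, the difference $q_{j_0}-sq$ is annihilated by $\pi_{j_0}$, hence factors through $\ker\pi_{j_0}=K/K_{j_0}=\bigcup_{j\ge j_0}K_j/K_{j_0}$. As $L$ is finitely generated, the image of $q_{j_0}-sq$ is a finitely generated subobject of this directed union, so it lands in $K_{j_1}/K_{j_0}$ for some $j_1\ge j_0$. Composing with the quotient $\tau\colon L/K_{j_0}\to L/K_{j_1}$ then gives $q_{j_1}=\tau q_{j_0}=\tau sq$, whence $K=\ker q\subseteq\ker q_{j_1}=K_{j_1}\subseteq K$, so $K=K_{j_1}$ is finitely generated.

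Granting the engine, the ``only if'' direction of (1) is quick. Since $\S$ generates and $N$ is finitely presented (hence finitely generated), $N$ is the target of an epimorphism $X_1\twoheadrightarrow N$ with $X_1$ a finite sum of elements of $\S$; its kernel $K$ is finitely generated by the engine, so $K$ in turn receives an epimorphism $X_0\twoheadrightarrow K$ from a finite sum of elements of $\S$. The composite $X_0\to K\hookrightarrow X_1$ yields the presentation $X_0\to X_1\to N\to 0$.

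Finally, for (2), let $0\to N'\to N\to N''\to 0$ be exact with $N',N''$ finitely presented. Extensions of finitely generated objects are finitely generated (a short directed-union argument, intersecting a putative directed union with $N'$ and projecting to $N''$), so $N$ is finitely generated and admits an epimorphism $X_1\twoheadrightarrow N$ with $X_1$ a finite sum of elements of $\S$. Put $K=\ker(X_1\to N)$ and $K''=\ker(X_1\to N'')$, using the composite epimorphism $X_1\to N\to N''$. The engine (with $L=X_1$, $N''$ finitely presented) shows $K''$ is finitely generated, and the snake lemma supplies a short exact sequence $0\to K\to K''\to N'\to 0$; applying the engine once more (now with $L=K''$ finitely generated and $N'$ finitely presented) shows $K$ is finitely generated. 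Hence $X_1\to N$ has finitely generated kernel, the construction in (1) produces a presentation of $N$ by finite sums of elements of $\S$, and the already-proved ``if'' direction gives that $N$ is finitely presented.
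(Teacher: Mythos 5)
Your proof is correct, and it is a genuinely different route from the paper's, because the paper offers no argument at all for this proposition --- it simply cites Breitsprecher's SATZ 1.11 and SATZ 1.9. The ``if'' direction of (1) via left exactness of $\C(-,M)$ and exactness of filtered colimits in $\Ab$ is the standard argument and is fine. Your ``engine'' --- in a locally finitely generated category, an epimorphism from a finitely generated object onto a finitely presented one has finitely generated kernel --- is exactly the classical Lazard-style lemma from module theory, and your transcription to the Grothendieck setting is sound at every step: AB5 gives $\colim_j L/K_j = L/K = N$ and $\ker\pi_{j_0}=\bigcup_{j\ge j_0}K_j/K_{j_0}$; finite presentability of $N$ produces the section $s$ at a finite stage; the image of $q_{j_0}-sq$ is a quotient of the finitely generated $L$, hence lands in a single $K_{j_1}/K_{j_0}$; and $q_{j_1}=\tau s q$ forces $K=K_{j_1}$. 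The reductions of the ``only if'' direction of (1) and of (2) to the engine (in (2) via the exact sequence $0\to K\to K''\to N'\to 0$ and two applications of the engine) are carried out correctly. What your approach buys over the paper's citation is a self-contained, element-free proof in the same spirit the paper advertises for Lemma~\ref{weaklim}; what the citation buys is brevity and the precise original scope of Breitsprecher's statements.

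One point of scope deserves flagging. Part (2) is stated for an arbitrary Grothendieck category, but your argument invokes the hypotheses of part (1) twice: the engine needs every object to be a directed union of finitely generated subobjects, and your extension argument produces the epimorphism $X_1\twoheadrightarrow N$ from the generating set $\S$. So what you actually prove is (2) for Grothendieck categories admitting a generating set of finitely presented (or at least finitely generated) objects. This is harmless here --- it is the setting of Breitsprecher's SATZ 1.9 and the only setting in which the paper ever applies the proposition --- but you should either state the hypothesis explicitly or note that (2) holds with no generation hypothesis by a different AB5 argument: given $h\colon N\to\colim M_\lambda$, factor $h|_{N'}$ through some $M_\lambda$, push out $N'\hookrightarrow N$ along it to get an extension of $N''$ by $M_\lambda$, observe that the induced extension of $N''$ by $\colim M_\lambda$ splits, and use finite presentability of $N''$ to obtain a splitting, and hence the desired factorization of $h$, at a finite stage.
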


Next we look at some properties of the class $\dlim\S$ of objects in $\C$ that can be obtained as a direct limit of objects from $\S$.

\begin{lemma}\cite[Lemma p.~1664]{boevey94}\label{limclosed}  
Let $\C$ be a locally finitely presented Grothendieck category, let $M\in\C$ and let $\S$ be a collection of finitely presented objects closed under direct sums.
If any map from a finitely presented object to $M$ factors through some $S\in\S,$
then $M\in\dlim\S.$
In particular $\dlim\S$ is closed under direct limits and direct summands.
\end{lemma}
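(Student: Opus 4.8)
The plan is to realize $M$ as a filtered colimit over the comma-type category $\mathcal D=\S/M$ whose objects are the maps $s\colon S\to M$ with $S\in\S$ and whose morphisms $(S,s)\to(S',s')$ are the maps $h\colon S\to S'$ with $s'h=s$. Sending $(S,s)$ to $S$ defines a functor $\mathcal D\to\C$ landing in $\S$, and the maps $s$ assemble into a canonical morphism $\alpha\colon\colim_{\mathcal D}S\to M$. I would prove the lemma by showing (i) that $\mathcal D$ is filtered and (ii) that $\alpha$ is an isomorphism; then $M$ is a direct limit of objects of $\S$, i.e.\ $M\in\dlim\S$. Throughout I would use the standard fact that in a locally finitely presented abelian category the cokernel of a map between finitely presented objects is again finitely presented (the finitely presented objects are closed under finite colimits); this is what lets me feed newly formed cokernels back into the factorization hypothesis.

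For filteredness: $\mathcal D$ is nonempty because the zero object is the empty direct sum, hence lies in $\S$ (equivalently, apply the hypothesis to $0\to M$). Given $(S_1,s_1)$ and $(S_2,s_2)$, the biproduct $S_1\oplus S_2\in\S$ together with the map $s\colon S_1\oplus S_2\to M$ determined by $s\iota_i=s_i$ receives morphisms from both, using closure of $\S$ under direct sums. The delicate point --- and the step I expect to be the main obstacle --- is coequalizing a parallel pair $h,h'\colon(S_1,s_1)\to(S_2,s_2)$. Here $s_2(h-h')=s_1-s_1=0$, so writing $\pi\colon S_2\to C:=\operatorname{coker}(h-h')$ the map $s_2$ factors as $\bar s\pi$. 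Since $C$ is finitely presented, the hypothesis applies to $\bar s\colon C\to M$, giving $\bar s=s_3g'$ with $S_3\in\S$; then $g:=g'\pi\colon S_2\to S_3$ satisfies $s_3g=s_2$ and $g(h-h')=0$, i.e.\ it is a morphism in $\mathcal D$ coequalizing $h$ and $h'$.

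To see $\alpha$ is an isomorphism I would argue it is both monic and epic in the Grothendieck category $\C$. For epi, let $q\colon M\to\operatorname{coker}\alpha$; then $qs=q\alpha\lambda_S=0$ for every $(S,s)$, and since every map $u\colon K\to M$ from a finitely presented $K$ factors through some $(S,s)$ we get $qu=0$ for all such $u$; as the finitely presented objects generate $\C$ this forces $q=0$. For mono, a map $t\colon K\to L:=\colim_{\mathcal D}S$ out of a finitely presented $K$ factors as $\lambda_S t'$ with $t'\colon K\to S$ (because $K$ is finitely presented and the colimit is filtered); if $\alpha t=0$ then $st'=0$, and applying the same cokernel-and-factorization trick to $t'$ produces a morphism $g\colon(S,s)\to(S'',s'')$ in $\mathcal D$ with $gt'=0$, whence $t=\lambda_S t'=\lambda_{S''}gt'=0$. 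Since finitely presented objects generate, $\ker\alpha=0$. Thus $\alpha$ is an isomorphism and $M\in\dlim\S$.

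Finally, for the closure statements I would first record the trivial converse: if $N\in\dlim\S$ then any map from a finitely presented object into $N$ factors through some $S\in\S$, since it factors through a term of the defining filtered colimit. Hence membership in $\dlim\S$ is \emph{equivalent} to the factorization property, and both closure assertions reduce to verifying that property. For a filtered colimit $M=\colim_i M_i$ of objects of $\dlim\S$, a map from a finitely presented $K$ first factors through some $M_i$ and then, by the converse, through an object of $\S$; for a direct summand $N$ of $M\in\dlim\S$, a map $K\to N$ composed with the inclusion factors through some $S\in\S$, and post-composing with the projection exhibits $K\to N$ itself as factoring through $S$. In both cases the main part of the lemma yields membership in $\dlim\S$.
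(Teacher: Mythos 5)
Your proof is correct; since the paper offers no argument of its own for this lemma but simply cites \cite[Lemma p.~1664]{boevey94}, your comma-category proof --- filteredness of $\S/M$ via the cokernel-and-refactoring trick, then epi/mono of the canonical map $\alpha$ tested against the finitely presented generators --- is essentially the standard argument from that source, and your reduction of both closure statements to the (trivially equivalent) factorization property is exactly how the ``in particular'' clause is meant to follow. The only point worth adding is set-theoretic: for $\colim_{\mathcal{D}}S$ to exist you should note that $\mathcal{D}=\S/M$ is essentially small, which holds because $\fp{\C}$ is skeletally small in a locally finitely presented Grothendieck category (cf.\ Remark~\ref{lfp-eqc}), so $\S$ may be replaced by a small skeleton.
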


\begin{remark}
  Notice that the converse is true by definition for any $\S$ and $\C$.
\end{remark}

We will later need the following way of extending the defining isomorphism of an adjunction to the level of Exts.

\begin{lemma}\cite[Lem.~5.1]{HJ}\label{HJ}
  Let $F \colon \mathscr{C} \leftrightarrows \mathscr{D} \,\colon\!G$ be an adjunction of abelian categories, where $F$ is left adjoint of $G$, and let $A \in \mathcal{C}$ be an object. If $G$ is exact and if $F$ leaves every short exact sequence $0 \to A' \to E \to A \to 0$ (ending in $A$) exact, then there is a natural isomorphism $\Ext_\mathscr{D}^1(FA,-) \cong \Ext_\mathscr{C}^1(A,G\mspace{2mu}-)$.
\end{lemma}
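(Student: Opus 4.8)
The plan is to work with the Yoneda description of $\Ext^1$, in which $\Ext^1_{\mathscr{C}}(A,B)$ is the abelian group (under Baer sum) of equivalence classes of short exact sequences $0 \to B \to E \to A \to 0$, and to exhibit the desired isomorphism as an explicit comparison map built from the unit $\eta \colon \Id_{\mathscr{C}} \to GF$ and the counit $\eps \colon FG \to \Id_{\mathscr{D}}$. Fix $A$ and let $D \in \mathscr{D}$ vary. Since $G$ is exact it induces a homomorphism $G_* \colon \Ext^1_{\mathscr{D}}(FA,D) \to \Ext^1_{\mathscr{C}}(GFA, GD)$, and pullback along $\eta_A \colon A \to GFA$ gives $\eta_A^* \colon \Ext^1_{\mathscr{C}}(GFA,GD) \to \Ext^1_{\mathscr{C}}(A,GD)$. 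I would define the comparison map to be the composite $\Phi_D = \eta_A^* \circ G_*$. Being a composite of additive Ext operations that are each natural in $D$ (with $A$, hence $\eta_A$, held fixed), $\Phi$ is automatically a natural transformation of functors $\mathscr{D} \to \Ab$ and a group homomorphism; so the entire content reduces to proving that each $\Phi_D$ is bijective.

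For the inverse I would go the other way: apply $F$ to a representing extension and correct by the counit. Given $\zeta \colon 0 \to GD \to E \to A \to 0$, the hypothesis that $F$ leaves short exact sequences ending in $A$ exact is exactly what guarantees that $F\zeta \colon 0 \to FGD \to FE \to FA \to 0$ is again short exact; pushing out along $\eps_D \colon FGD \to D$ then yields an element of $\Ext^1_{\mathscr{D}}(FA,D)$. This produces a candidate inverse $\Psi_D = (\eps_D)_* \circ F_*$, where $F_*$ is the (well-defined, by the same hypothesis) operation on extensions induced by $F$. Since I will show $\Psi_D = \Phi_D^{-1}$, additivity of $\Psi$ comes for free and need not be checked directly.

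The core of the argument is the verification that $\Phi\Psi = \Id$ and $\Psi\Phi = \Id$, and this is where the two triangle identities enter. For $\Phi\Psi$ I would use that $G$, being exact, preserves the pushout defining $(\eps_D)_*$, so that $G_*(\eps_D)_* = (G\eps_D)_* G_*$ and $G_*F_* = (GF)_*$; naturality of $\eta$ applied to the row $\zeta$ gives a morphism of short exact sequences $\zeta \to GF\zeta$ covering $(\eta_{GD},\eta_A)$, which in Ext terms reads $\eta_A^*(GF)_*\zeta = (\eta_{GD})_*\zeta$. As pullback and pushforward act on different variables they commute, collapsing the composite to $(G\eps_D \circ \eta_{GD})_*\zeta = \zeta$ by the triangle identity $G\eps \circ \eta G = \Id$. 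For $\Psi\Phi$, starting from $\xi \colon 0 \to D \to E \to FA \to 0$, I would instead build an explicit morphism of short exact sequences from $F(\eta_A^* G\xi)$ down to $\xi$: composing the $F$-image of the pullback comparison $\eta_A^*G\xi \to G\xi$ with the counit ladder $\eps \colon FG\xi \to \xi$ produces a map covering $(\eps_D,\; \eps_{FA}\circ F\eta_A)$, and the triangle identity $\eps F \circ F\eta = \Id$ collapses the right-hand vertical to $\Id_{FA}$; the Ext compatibility of such a morphism then yields $(\eps_D)_* F(\eta_A^* G\xi) = \xi$.

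The main obstacle I anticipate is precisely the $\Psi\Phi = \Id$ computation. Unlike $G$, the functor $F$ is not assumed exact, so I can neither form $FG\xi$ as a genuine extension (its underlying sequence ends in $GFA$, not in $A$, so the hypothesis does not apply) nor apply $F$ freely to pullbacks. The hypothesis on $F$ is tailored to sidestep exactly this difficulty: it is invoked only to ensure that $F$ turns the particular pullback extension $\eta_A^* G\xi$ — which does end in $A$ — into an honest short exact sequence, after which all the diagram-level identities are driven by naturality of $\eps$ and the triangle identity rather than by any exactness of $F$. Routine naturality of $\Phi_D$ in $D$ follows from functoriality of $G_*$ and the commutation of pullback and pushforward, completing the isomorphism of functors.
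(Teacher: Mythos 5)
Your proof is correct. Note first that the paper itself contains no argument for this lemma --- it is imported wholesale from \cite[Lem.~5.1]{HJ} --- so there is no internal proof to compare against, and your write-up must stand on its own; it does. The comparison maps $\Phi_D=\eta_A^*\circ G_*$ and $\Psi_D=(\eps_D)_*\circ F_*$ are the natural candidates, and working with Yoneda $\Ext^1$ is the right (indeed forced) framework, since neither $\mathscr{C}$ nor $\mathscr{D}$ is assumed to have enough projectives or injectives. The delicate points are all handled: $F_*$ is well defined on classes in $\Ext^1_{\mathscr{C}}(A,GD)$ precisely because every representative ends in $A$, and it respects equivalences since $F$ applied to a morphism of extensions between two rows that are exact after applying $F$ is again such a morphism; the interchange $G_*(\eps_D)_*=(G\eps_D)_*G_*$ is legitimate because an exact additive functor preserves pushouts (cokernels of maps into biproducts), and the same fact gives preservation of Baer sums, hence additivity of $\Phi_D$; and both composites collapse via the triangle identities $G\eps\circ\eta G=\Id$ and $\eps F\circ F\eta=\Id$ exactly as you describe, using the standard fact that a morphism of extensions covering $(\alpha,\gamma)$ yields $\alpha_*(\mathrm{top})=\gamma^*(\mathrm{bottom})$. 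Your observation about the $\Psi\Phi=\Id$ step is the genuine crux: $FG\xi$ need not be exact, but you use it only as a ladder of commuting squares linking the honest extension $F(\eta_A^*G\xi)$ --- exact because $\eta_A^*G\xi$ ends in $A$ --- to $\xi$, which is exactly the role the hypothesis on $F$ is tailored to play. In method this is the standard proof of such adjunction--Ext lemmas, so nothing here diverges from what the cited source does; the argument is complete as proposed.
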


\subsection{Cotorsion pairs}
The theory of cotorsion pairs goes back to Salce~\cite{salce79} and has been intensively studied. See for instance G{\"o}bel and Trlifaj \cite{trlifaj06}.
\begin{defn}
Let $\X$ be a class of objects in an abelian category $\C$. We define
\begin{itemize}
\item $\X^\perp=\{Y\in\C\mid \forall X\in\X\colon \Ext_\C^1(X,Y)=0\}$ \vspace*{0.5ex}
\item ${}^\perp\X=\{Y\in\C\mid \forall X\in\X\colon \Ext_\C^1(Y,X)=0\}$
\end{itemize}
\end{defn}

\begin{defn}
  Let  $\A$ and $\B$ be classes of objects in an abelian category $\C.$ 
We say $(\A,\B)$ is a \emph{cotorsion pair}, if
$\A^\perp=\B$ and ${}^\perp \B =\A$. It is \emph{complete} if every $C\in\C$ has 
a presentation
$$0\to B\to A\to C\to 0$$
with $A\in\A$ and $B\in\B$ and a presentation
$$0\to C\to B'\to A'\to 0$$
with  $A'\in\A$ and $B'\in\B$. In this paper, we are only concerned with the first presentation.
\end{defn}
\begin{defn}
  An $\S$-filtration of an object $X$ in a category $\C$ for a class of objects $\S$ is a chain
$$
0=X_0\subset\cdots\subset X_i\subset\cdots\subset X_\alpha=X
$$
of objects in $\C$ such that every $X_{i+1}/X_i$ is in $\S$, and for every limit ordinal 
$\alpha'\leq\alpha$ one has $\dlim_{i<\alpha'} X_i=X_{\alpha'}$.
An object $X$ called $\S$-filtered if it has an $\S$-filtration.
If $\alpha=\omega$ we say the filtration is countable, and if $\alpha<\omega$ that it is
finite. In the latter case we will also say that $X$ is a finite extension of $\S.$
\end{defn}

\begin{prop}\label{completecotorsion}
  If $\S$ is any generating set of objects in a Grothendieck category, then $({}^\perp(\S^\perp),\S^\perp)$
is a complete cotorsion pair, and the objects in ${}^\perp(\S^\perp)$ are precisely the direct summands of $\S$-filtered objects.
\end{prop}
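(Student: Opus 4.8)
The plan is to establish the three assertions in turn. That $({}^\perp(\S^\perp),\S^\perp)$ is a cotorsion pair is formal: writing $\A={}^\perp(\S^\perp)$ and $\B=\S^\perp$, the identity ${}^\perp\B=\A$ holds by definition, while for $\A^\perp=\B$ one observes that $\S\subseteq\A$ (since $\Ext^1(S,Y)=0$ whenever $S\in\S$ and $Y\in\S^\perp$), whence $\A^\perp\subseteq\S^\perp=\B$, and the reverse inclusion $\B\subseteq\A^\perp$ holds for any class. The substance lies in completeness and the filtration description, for which I would adapt two standard tools to the Grothendieck setting: Eklof's Lemma and the transfinite Eklof--Trlifaj construction, both of which are available here via \v{S}\v{t}ov\'i\v{c}ek \cite{stovicek13}.

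I would first dispatch the easy inclusion in the filtration description. By Eklof's Lemma, for any class $\mathcal B$ the left-orthogonal ${}^\perp\mathcal B$ is closed under transfinite extensions; applied with $\mathcal B=\S^\perp$ and using $\S\subseteq{}^\perp(\S^\perp)$, this shows that every $\S$-filtered object lies in ${}^\perp(\S^\perp)$. Moreover ${}^\perp(\S^\perp)$ is closed under direct summands, because $\Ext^1$ carries a summand to a summand of a vanishing $\Ext$-group. Hence every direct summand of an $\S$-filtered object belongs to ${}^\perp(\S^\perp)$, giving the inclusion ``$\supseteq$''.

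For completeness and the reverse inclusion I would produce, for each object $C$, a short exact sequence $0\to B\to A\to C\to 0$ with $A$ $\S$-filtered and $B\in\S^\perp$. This is where the generating hypothesis on $\S$ enters: it yields an epimorphism $\bigoplus_j S_j\twoheadrightarrow C$ from a coproduct of objects of $\S$, with kernel $K$; note that $\bigoplus_j S_j$ is itself $\S$-filtered. Applying the Eklof--Trlifaj construction to $K$ gives $0\to K\to B\to A''\to 0$ with $A''$ $\S$-filtered and $B\in\S^\perp$. Forming the pushout of $K\hookrightarrow\bigoplus_j S_j$ along $K\to B$ then produces the desired row $0\to B\to A\to C\to 0$, where simultaneously $0\to\bigoplus_j S_j\to A\to A''\to 0$ is exact, so that $A$, being an extension of two $\S$-filtered objects, is $\S$-filtered (concatenate the filtrations) and in particular lies in $\A$. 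This establishes the first presentation, hence completeness. For ``$\subseteq$'', take any $A\in{}^\perp(\S^\perp)$ and apply this presentation to $C=A$: the resulting $0\to B\to\tilde A\to A\to 0$ has $B\in\S^\perp$, so $\Ext^1(A,B)=0$ forces it to split and exhibits $A$ as a direct summand of the $\S$-filtered object $\tilde A$.

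I expect the main obstacle to be the Eklof--Trlifaj construction itself rather than the formal pushout bookkeeping. One must iterate transfinitely the process of adjoining, over all $S\in\S$, copies of $S$ that cap off the elements of $\Ext^1(S,-)$, and then verify that the resulting colimit $B$ genuinely lies in $\S^\perp$. The vanishing of $\Ext^1(S,B)$ rests on a smallness argument---that each $S\in\S$ is $\kappa$-presentable for a suitable regular cardinal $\kappa$, so that every extension of $S$ is already trapped at some stage of the chain---and it is precisely here that the Grothendieck (locally presentable) hypothesis is indispensable and care is required.
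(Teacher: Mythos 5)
Your proposal is correct and follows essentially the same route as the paper, whose ``proof'' of this proposition is only a citation to Saor{\'\i}n--{\v{S}}{\v{t}}ov{\'\i}{\v{c}}ek and {\v{S}}{\v{t}}ov{\'\i}{\v{c}}ek: those sources establish the result by exactly the argument you outline, namely Eklof's Lemma for the inclusion ``$\supseteq$'', the Eklof--Trlifaj transfinite construction combined with the Salce-style pushout along a generating epimorphism $\bigoplus_j S_j \twoheadrightarrow C$ for the special precover, and splitting that precover when $C \in {}^\perp(\S^\perp)$ to get the summand characterization. The only point to tidy is that completeness as defined in the paper also requires the second presentation $0 \to C \to B' \to A' \to 0$, which is the immediate output of the Eklof--Trlifaj construction applied to $C$ itself (a step you already invoke for the kernel $K$), so your sentence ``this establishes the first presentation, hence completeness'' should be amended accordingly.
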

\begin{proof}
  See Saor{\'{\i}}n and {\v{S}}{\v{t}}ov{\'{\i}}{\v{c}}ek \cite[Exa.~2.8 and Cor.~2.15]{stovicek11} (for the last assertion also see {\v{S}}{\v{t}}ov{\'{\i}}{\v{c}}ek \cite[Prop.~1.7]{stovicek13}).
\end{proof}

When $\C$ is locally finitely presented and $\S$ consists of finitely presented objects and is closed under extensions, 
we can in fact realize any $S\in{}^\perp(\S^\perp)$ as a direct limit.
This generalizes the idea that an arbitrary direct sum can be realized as a direct limit of finite sums.
The tool that allows us to generalize this idea is the generalized Hill Lemma.
The full statement is rather technical so we just state here what we need (hence ``weak version''):
\begin{lemma}[Hill Lemma -- weak version]\label{hilllemma}\cite[Thm 2.1]{stovicek13} 
Let $\C$ be a locally finitely presented Grothendieck category, $\S$ be a set of finitely presented objects,
and assume $X$ has an $\S$-filtration. Given any map $f\colon S\to X$ from a finitely presented object,
then $\Ima(f)\subset S'\subset X$ for some finite extension $S'$ of elements of $\S.$
\end{lemma}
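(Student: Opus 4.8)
The plan is to read off the statement from the full version of the generalized Hill Lemma \cite[Thm.~2.1]{stovicek13}. Feeding the given $\S$-filtration $0=X_0\subset\cdots\subset X_\alpha=X$ to that theorem produces a family $\mathcal H$ of subobjects of $X$, conveniently indexed by subsets $T$ of the ordinal $\alpha$, such that $M_\emptyset=0$, the family is closed under arbitrary sums and intersections, and for $T\subset T'$ the quotient $M_{T'}/M_T$ carries an $\S$-filtration whose factors are indexed by $T'\setminus T$. In particular, a member $M_T$ with $T$ \emph{finite} is a finite extension of elements of $\S$. So it suffices to place $\Ima(f)$ inside such a member.

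First I would record that $\Ima(f)$ is finitely generated: as $\C$ is locally finitely presented, the finitely presented object $S$ is a quotient of a finite sum of generators (\Pref{fppresentation}), hence finitely generated, and therefore so is its quotient $\Ima(f)$. The whole matter thus reduces to showing that a finitely generated subobject $Y\subset X$ has \emph{finite support}, meaning $Y\subset M_T$ for some finite $T\subset\alpha$; taking $S'=M_T$ and $Y=\Ima(f)$ then yields $\Ima(f)\subset S'\subset X$ with $S'$ a finite extension of $\S$, as required.

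The main obstacle is exactly this finiteness of supports, and it is here that the hypothesis ``$\S$ consists of \emph{finitely} presented objects'' does the essential work. In Hill's construction the support of a single chosen generator is built recursively: its immediate support consists of the finitely many lower-index generators occurring in the images of the finitely many defining relations of the relevant factor, and these indices are strictly smaller in the ordinal ordering. The resulting support tree is therefore finitely branching, and its branches, being strictly decreasing sequences of ordinals, are finite; hence the tree itself is finite by K{\"o}nig's lemma, and a finite union of such finite supports is again finite, which gives the finite $T$ above. I would stress that this is the one step where finite presentation is strictly stronger than the ``small generation'' of the classical Hill Lemma: applied with its smallest admissible (uncountable) regular cardinal, \cite[Thm.~2.1]{stovicek13} only places $Y$ inside a \emph{countably} $\S$-filtered member, and it is precisely the finiteness of the defining relations that collapses ``countable'' to ``finite''.
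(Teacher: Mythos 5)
Your overall route---reading the statement off from the full generalized Hill Lemma---is essentially the paper's own: the paper supplies no independent argument, since the lemma is quoted directly as the $\kappa=\aleph_0$ instance of \cite[Thm 2.1]{stovicek13}, and your reduction steps are correct ($\Ima(f)$ is finitely generated because $S$ is a quotient of a finite sum of generators by \Pref{fppresentation}; $0$ belongs to the Hill family; a member indexed by a finite set is a finite extension of elements of $\S$ by the filtration property of quotients). Where you go wrong is in the final paragraph: {\v{S}}{\v{t}}ov{\'{\i}}{\v{c}}ek's Theorem 2.1 is stated for an \emph{arbitrary} infinite regular cardinal $\kappa$, and the hypotheses that the lemma carries ($\C$ locally finitely presented, $\S$ a set of finitely presented objects) are precisely what make $\kappa=\aleph_0$ admissible. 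Applying the theorem with $\kappa=\aleph_0$, $N=0$, and the finitely generated subobject $Y=\Ima(f)$, the property that $N+Y$ embeds in a member $P$ of the Hill family with $P/N$ being ${<}\kappa$-presentable (indeed $\S$-filtered over an index set of size ${<}\kappa$) immediately gives $\Ima(f)\subset P\subset X$ with $P$ a finite extension of elements of $\S$; no collapse from ``countable'' to ``finite'' is required. Your K{\"o}nig's-lemma supplement is therefore redundant---it is in substance a re-proof of the support-closure step \emph{inside} the cited proof, where finiteness of the closure of a finite set follows by transfinite induction on the index (supports consist of strictly smaller ordinals, the same well-foundedness your tree argument exploits). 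Moreover, as written that supplement reasons with elements, ``generators,'' and ``defining relations,'' which have no direct meaning in a general Grothendieck category and would need to be recast via morphisms from finitely presented objects (in the style of \Pref{fppresentation}); making this precise is exactly the content of \cite[Thm 2.1]{stovicek13}, so it should be cited rather than sketched. In short: the conclusion and the citation-based strategy are right, but delete the claim that the cited theorem only admits uncountable cardinals, and with it the need for your patch.
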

We can now prove:
\begin{prop}\label{perplim}
  Let $\S$ be a skeletally small class of finitely presented objects closed under finite extensions in a locally finitely presented Grothendieck category $\C$.
  Then any $\S$-filtered object is a direct limit of objects from $S.$ In particular, ${}^\perp(\S^\perp)\subset\dlim \S$ when $\S$ generates $\C$. 
\end{prop}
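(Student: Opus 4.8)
The plan is to deduce the first assertion from the Crawley--Boevey criterion (\Lref{limclosed}), using the weak Hill Lemma (\Lref{hilllemma}) to verify its hypothesis, and then to obtain the ``in particular'' clause by combining this with the structure theorem for ${}^\perp(\S^\perp)$ recorded in \Pref{completecotorsion}.

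First I would fix an $\S$-filtered object $X$ and check that $\S$ satisfies the closure hypothesis of \Lref{limclosed}. Since a finite direct sum $S_1 \oplus S_2$ sits in a split short exact sequence $0 \to S_1 \to S_1 \oplus S_2 \to S_2 \to 0$, it is a finite extension of elements of $\S$, and closure of $\S$ under finite extensions then yields closure under finite direct sums by induction. (Here the finitely presented hypothesis on $\S$ forces ``direct sums'' in \Lref{limclosed} to be read as finite ones, since an infinite direct sum of finitely presented objects need not be finitely presented.)

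Next comes the heart of the argument: the verification that every map $f\colon K \to X$ from a finitely presented object $K$ factors through an object of $\S$. Applying the weak Hill Lemma to the $\S$-filtration of $X$ produces a subobject $S' \subset X$ that is a finite extension of elements of $\S$ and that contains $\Ima(f)$. By closure under finite extensions, $S' \in \S$; and since $\Ima(f) \subseteq S'$, the map $f$ factors as $K \to S' \hookrightarrow X$. With the closure hypothesis already in hand, \Lref{limclosed} now gives $X \in \dlim\S$, proving the first assertion.

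For the final claim, assume $\S$ generates $\C$. By \Pref{completecotorsion} every object of ${}^\perp(\S^\perp)$ is a direct summand of some $\S$-filtered object, which by the first part lies in $\dlim\S$; since \Lref{limclosed} guarantees that $\dlim\S$ is closed under direct summands, we conclude ${}^\perp(\S^\perp) \subset \dlim\S$. I expect the main obstacle to be the factorization step, where one must be careful that the Hill Lemma's output $S'$ genuinely lands in $\S$ (this is exactly where closure under finite extensions is used) and that the containment $\Ima(f) \subseteq S'$ yields an honest factorization through the subobject $S'$ rather than a mere comparison of images.
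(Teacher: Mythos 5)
Your proof is correct and follows essentially the same route as the paper: the weak Hill Lemma (\Lref{hilllemma}) together with closure under finite extensions traps the image of any map from a finitely presented object inside an $\S$-subobject, which is exactly the factorization hypothesis of \Lref{limclosed}, and the ``in particular'' clause follows from \Pref{completecotorsion} plus closure of $\dlim\S$ under direct summands. If anything, you are more explicit than the paper's terse argument, spelling out that closure under finite extensions gives closure under finite direct sums (the reading of \Lref{limclosed} the paper tacitly uses, since infinite direct sums of nonzero finitely presented objects are not finitely presented) and that $\Ima(f)\subset S'$ yields an honest factorization $K\to S'\hookrightarrow X$.
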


\begin{proof}
Let $X$ be an $\S$-filtered object. Since $\C$ is locally finitely presented, $X$ is also the direct limit
of finitely presented objects $X_i$, hence also the direct limit of
its finitely generated subobjects (images of finitely presented objects), but these are majored by $\S$-subobjects by \Lref{hilllemma},
since $\S$ is closed under finite extensions.
The last statement follows from \Pref{completecotorsion} and \Lref{limclosed}.
\end{proof}

\section{Abstract module categories}
\label{Abstract-module-categories}
The aim in this section is to desribe the direct limit closure of $\S$ in the following setup:
\begin{set}\label{setup1}
  Let $\C_L$, $\C_0$ and $\C_R$ be Grothendieck categories,
let $\S_L\subset\C_L$ and $\S_R\subset\C_R$ be generating sets 
closed under extensions, and let $1\in\C_0$ be finitely presented. Assume that we have a right continuous bifunctor
(i.e. it preserves direct limits in each variable)
$$-\otimes -\colon \C_R\times\C_L\to\C_0$$
and a natural duality
\begin{align*}
 (-)^*&\colon \S_L\to\S_R
\end{align*}
such that for any $S\in\S_L$ we have natural isomorphisms (also natural in $S$):
\begin{align*}
\C_0(1,S^*\otimes-)&\cong\C_L(S,-) \quad \text{and} \\
\C_0(1,-\otimes S)&\cong\C_R(S^*,-)
\end{align*}
which is then analogously true for any $S\in\S_R$ by the duality between $\S_L$ and $\S_R$. For simplicity we will often write $\C$ for either $\C_L$ or $\C_R$ and $\S$ for either $\S_L$ and~$\S_R$ (see for example \Tref{stronglim}). Hopefully this should not cause any confusion.
\end{set}

\begin{remark}\label{finiteisfp}
  Note that in \Sref{setup1} any $S\in\S$ is finitely presented because $1$ is finitely presented and $\otimes$ is right continuous, so $\C_L$ and $\C_R$ are necessarily locally finitely presented.
  When there are notational differences we will work with $\C_L$ though everything could be done for $\C_R$ instead. 
\end{remark}

\begin{ex}
\label{examples-of-Setup1}
Some specific examples of \Sref{setup1} to have in mind are:
\begin{enumerate}
\setlength{\itemsep}{0.5ex}

\item \label{item-Mod} $A$ is a ring, $\C_L$\sla $\C_R$ is the category $A\text{-Mod}$\sla $\text{Mod-}A$ of left\sla right $A$-modules, \mbox{$\C_0=\Ab$} is the category of abelian groups, $1$ is $\mathbb{Z}$, $\otimes = \otimes_A$ is the ordinary tensor product of modules, $\S_L$\sla $\S_R$ is the category of finitely generated projective left\sla right $A$-modules, and $(-)^*$ is the functor $\mathrm{Hom}_A(-,{}_AA_A)$.

\item \label{item-GrMod} $A$ is a graded ring, $\C_L$\sla $\C_R$ is the category $A\text{-GrMod}$\sla $\text{GrMod-}A$ of left\sla right graded $A$-modules, $\C_0$ is $\mathbb{Z}\text{-GrMod}$, $1$ is $\mathbb{Z}$, \mbox{$\otimes = \otimes_A$} is the ususal tensor product of graded modules, $\S_L$\sla $\S_R$ is the category of finitely generated free graded left\sla right $A$-modules (that is, finite direct sums of shifts of $A$), and $(-)^*$ is the functor $\mathrm{Hom}_A(-,{}_AA_A)$.

\item \label{item-Ch} $A$ is a ring, $\C_L$\sla $\C_R$ is the category $\text{Ch}(A\text{-Mod})$\sla $\text{Ch}(\text{Mod-}A)$ of chain complexes of left\sla right $A$-modules, $\C_0$ is $\text{Ch}(\Ab)$, $1$ is $\mathbb{Z}$ (viewed as a complex concentrated in degree zero), \mbox{$\otimes = \otimes_A$} is the total tensor product of chain complexes, $\S_L$\sla $\S_R$ is the category of bounded chain complexes of finitely generated projective left\sla right $A$-modules (these are often called \emph{perfect complexes}), and $(-)^*$ is the functor $\mathrm{Hom}_A(-,{}_AA_A)$. 

\item \label{item-DGMod} $A$ is a DGA, $\C_L$\sla $\C_R$ is the category $A\text{-DGMod}$\sla $\text{DGMod-}A$ of left\sla right DG $A$-modules, $\C_0$ is $\text{Ch}(\Ab)$, $1$ is $\mathbb{Z}$ (as in (3)), \mbox{$\otimes = \otimes_A$} is the ususal tensor product of DG-modules, $\S_L$\sla $\S_R$ is the category of finitely generated semi-free left\sla right DG $A$-modules (that is, finite extensions of shifts of $A$), and $(-)^*$ is the functor $\mathrm{Hom}_A(-,{}_AA_A)$.

\item \label{item-monoidal} Let $(\C_0,\otimes_1,1,[-,-])$ be any closed symmetric monoidal abelian category where $1$ is finitely presented.
Then one can take $\C_L = \C_0 = \C_R$ and $\otimes=\otimes_1$. Moreover, $\S_L = \S_R$ could~be the subcategory of \emph{dualizable} objects in $\C_0$ (see \ref{subsec:dualizable}) and $(-)^*=[-,1]$. 

\end{enumerate}

These examples are all special cases of the ``concrete module categories'' studied in Section~\ref{Concrete-module-categories}, and further in Section~\ref{Examples}. A special case of (4) is where $\C_0=\QCoh(X)$ is the category of quasi-coherent sheaves on a sufficiently nice scheme $X$ and where $\S_L = \S_R$ is the category of locally free sheaves of finite rank; see \ref{subsection:QCoh} for details.

\begin{enumerate}
\setcounter{enumi}{5}
\setlength{\itemsep}{0.5ex}

\item \label{item-func} $\X$ is an additive category, $\C_L$\sla $\C_R$ is the category $[\X,\Ab]$\sla $[\X^\mathrm{op},\Ab]$ of~co\-va\-ri\-ant\sla contravariant additive functors from $\X$ to $\Ab$, $\C_0$ is $\Ab$, $1$ is $\mathbb{Z}$, \mbox{$\otimes = \otimes_\X$} is the  tensor product from Oberst and R{\"o}hrl \cite{oberst70}, $\S_L$\sla $\S_R$ is the category of representable co\-va\-ri\-ant\sla contravariant functors, and 
the functor $(-)^*$ maps $\X(x,-)$ to $\X(-,x)$ and vice versa ($x \in \X$). See \ref{subsection:additive-functors} for details.

\end{enumerate}
\end{ex}

Recall that to simplify notation we often write $\C$ for either $\C_L$ or $\C_R$ and $\S$ for either $\S_L$ and $\S_R$ (see \Sref{setup1}). In order to describe $\dlim\S$, we define from $\S$ three new classes of objects in $\C$.

\begin{defn}\label{projacycdef}
Let $\C$ and $\S$ be as in \Sref{setup1}. Let $(\P,\E)$ be the cotorsion pair in $\C$ generated by $\S$. By Proposition~\ref{completecotorsion} this cotorsion pair is complete as $\S$ is a set.

Objects in $\P$ are called \emph{semi-projective} and objects in $\E$ are called \emph{acyclic} (with respect to $\S$). An object $M\in\C_L$ is called \emph{(tensor-)flat} if the functor \mbox{$-\otimes M$} is exact.
A functor $F\colon \C\to\C_0$ \emph{preserves acyclicity} if $F(\E)\subset 1^\perp$. Finally we say that an object $M\in\C_L$ is \emph{semi-flat} if $M$ is flat and \mbox{$-\otimes M$} preserves acyclicity.
\end{defn}

When necessary we shall use the more elaborate notation $(\P_L,\E_L)$ for the cotorsion pair in $\C_L$ generated by $\S_L$ and similarly for $(\P_R,\E_R)$.

\begin{ex}\label{flatex}
We immediately see that if $1 \in \C_0$ is projective, then semi-flat is the same as flat.
This is for instance the case in $A\text{-Mod}$, $A\text{-GrMod}$ and $[\X,\Ab]$ (see~(\ref{item-Mod}), (\ref{item-GrMod}), and (\ref{item-func}) in Example \ref{examples-of-Setup1}), where every object is acyclic, and semi-projective is the same as projective. In $\text{Ch}(A) = \text{Ch}(A\text{-Mod})$ and in $A\text{-DGMod}$ (see (\ref{item-Ch}) and (\ref{item-DGMod}) in Example~\ref{examples-of-Setup1}) this is not the case, and the notions acyclic, semi-projective and semi-flat agree with the usual ones found in e.g.~\cite{avramov}. More on this and other examples after the main theorem. 
\end{ex}

We are now ready for the main lemma.
The proof is modelled on \cite[Thm 1.1]{holm14} which is modelled on \cite[Lem 1.1]{lazard68}.
We try to use the same notation.
\begin{lemma}\label{weaklim}
With the notation of \Sref{setup1},
let $M\in\C_L$ be an object such that $-\otimes M$ is left exact  and $\C_0(1,\phi\otimes M)$ is epi whenever $\phi$ is epi in $\C_R$ with $\ker\phi\in\E_R$. Then $M\in\dlim\S_L.$
\end{lemma}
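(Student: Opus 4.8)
The goal is to show $M \in \dlim \S_L$ by using \Lref{limclosed}: it suffices to prove that $\S_L$ is closed under direct sums (which follows since $\S_L$ is closed under extensions and — being images of $1$ under $-\otimes-$ applied to finite data — is closed under finite sums, and we can work with the direct-limit closure) and that every map $\sigma\colon S \to M$ from a finitely presented $S$ factors through some object of $\S_L$. Since $\S_L$ generates $\C_L$ and consists of finitely presented objects, by \Pref{fppresentation}(1) every finitely presented $S$ has a presentation by finite sums of elements of $\S_L$, so it is enough to handle maps out of finite sums of elements of $\S_L$, i.e.\ I may assume $S \in \S_L$ after reducing along such a presentation.

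**Key steps.** The plan is to translate the factorization problem into one about the tensor product using the adjunction isomorphisms of \Sref{setup1}. Given $\sigma\colon S \to M$ with $S \in \S_L$, the isomorphism $\C_0(1, S^*\otimes -) \cong \C_L(S,-)$ lets me view $\sigma$ as an element of $\C_0(1, S^*\otimes M)$. First I would build, using completeness of the cotorsion pair $(\P_R, \E_R)$ in $\C_R$ (Proposition~\ref{completecotorsion}), a short exact sequence in $\C_R$
$$
0 \to K \to P \to S^* \to 0
$$
with $P \in \P_R$ semi-projective and $K \in \E_R$ acyclic; equivalently present $S^*$ (or a relevant finitely generated approximation) as a quotient of a finite sum of elements of $\S_R$ with acyclic kernel. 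Applying the right-exact functor $-\otimes M$ gives an exact sequence $P \otimes M \to S^* \otimes M \to 0$, and applying $\C_0(1,-)$ — recalling the hypothesis that $\C_0(1, \phi \otimes M)$ is epi whenever $\phi$ is epi with kernel in $\E_R$ — shows that
$$
\C_0(1, P\otimes M) \to \C_0(1, S^*\otimes M)
$$
is surjective. Hence the class in $\C_0(1, S^*\otimes M)$ corresponding to $\sigma$ lifts to some element of $\C_0(1, P\otimes M)$. Since $P$ is a sum of elements of $\S_R$ whose duals lie in $\S_L$, I can re-translate this lifted element back through the natural isomorphism $\C_0(1, P\otimes M)\cong \C_L(P^*, M)$ (extending the duality to finite sums) into a genuine map $P^* \to M$ in $\C_L$, and the naturality of the isomorphisms forces the original $\sigma$ to factor as $S \to P^* \to M$, where $P^* \in \S_L$. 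This is exactly the factorization required by \Lref{limclosed}.

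**Left exactness.** The hypothesis that $-\otimes M$ is left exact is what guarantees the factorization map $S \to P^*$ really has source $S$ and is compatible: I would use left exactness of $-\otimes M$ applied to the dual sequence $0 \to S \to P^* \to (\ker)^* \to 0$ in $\C_L$ together with the duality $(-)^*\colon \S_R \to \S_L$ to identify $S$ as a subobject of $P^*$ compatibly with $\sigma$. The interplay between the naturality of the two adjunction isomorphisms and the duality is what makes the lifted element of $\C_0(1,P\otimes M)$ restrict correctly to $\sigma$ along $S \to P^*$.

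**Main obstacle.** The hard part will be the bookkeeping of naturality: one must verify that the element of $\C_0(1, P \otimes M)$ lifting $\sigma$ corresponds, under the dualized isomorphism, to a map $P^* \to M$ whose restriction along the inclusion $S \hookrightarrow P^*$ is precisely $\sigma$. This requires chasing the commutative square relating $\C_0(1, S^*\otimes M) \to \C_0(1,P\otimes M)$ (induced by $S^* \twoheadleftarrow P$, wait — by the epimorphism $P \to S^*$ it goes the other way) with the corresponding $\C_L$-level maps, and exploiting that the isomorphisms of \Sref{setup1} are natural in $S$. Ensuring that the kernel $K$ genuinely lies in $\E_R$ so that the epimorphism hypothesis on $\C_0(1, \phi\otimes M)$ applies — and reconciling the finitely generated/finitely presented reductions with the cotorsion-pair approximation — is the delicate technical point around which the whole argument turns.
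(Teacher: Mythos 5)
There are two genuine gaps, and together they remove the core of the argument. First, your opening reduction is invalid: you cannot ``assume $S\in\S_L$''. \Lref{limclosed} requires factoring maps $u\colon P\to M$ from \emph{arbitrary} finitely presented objects $P$, and for $P\in\S_L$ the factorization is trivially the identity, so proving the statement for $S\in\S_L$ says nothing about general $P$. The entire content of the Lazard-style argument lies in handling the relations: writing a presentation $L_1\xrightarrow{f}L_0\xrightarrow{g}P\to 0$ with $L_0,L_1\in\S_L$ (\Pref{fppresentation}), one must produce $L\in\S_L$, a map $v'\colon L_0\to L$ and $w\colon L\to M$ with $wv'=ug$ \emph{and} $v'f=0$, so that $v'$ descends along $g$ to the desired $v\colon P\to L$. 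Your proposal never confronts this. Correspondingly, the paper applies the cotorsion-pair approximation of \Pref{completecotorsion} not to $S^*$ but to $K=\ker(f^*\colon L_0^*\to L_1^*)$ --- an object which is in general neither finitely presented nor in $\S_R$ --- and this is also where left exactness of $-\otimes M$ is actually used: together with left exactness of $\C_0(1,-)$ it identifies $\C_0(1,K\otimes M)$ with $\ker(f_*)\subset\C_L(L_0,M)$, so that $ug$ lifts. Note also that in the restricted case you consider, $S^*\in\S_R\subset\P_R$, so your approximation sequence can be taken to be $0\to 0\to S^*\to S^*\to 0$ and your lifting step is vacuous; and your ``dual sequence'' $0\to S\to P^*\to(\ker)^*$ does not parse in \Sref{setup1}, where $(-)^*$ is only defined on $\S_L$ and $\S_R$, not on kernels or semi-projectives.

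Second, and decisively: after lifting to $\C_0(1,P\otimes M)$ with $P\in\P_R$, you assert that $P$ ``is a sum of elements of $\S_R$'' and invoke an isomorphism $\C_0(1,P\otimes M)\cong\C_L(P^*,M)$. Neither holds: by \Pref{completecotorsion}, semi-projectives are direct summands of \emph{transfinitely} $\S_R$-filtered objects, not finite sums, and the isomorphism of \Sref{setup1} is available only for objects of $\S_R$. This is exactly where the paper's key new ingredient enters, and your proposal has no substitute for it: by \Pref{perplim} (resting on the generalized Hill Lemma, \Lref{hilllemma}) the semi-projective approximation $L'$ of $K$ is a direct limit $\dlim L_i^*$ with $L_i\in\S_L$; since $1$ is finitely presented and $\otimes$ is right continuous, the lifted map $1\to L'\otimes M\cong\dlim(L_i^*\otimes M)$ factors through a finite stage $L^*\otimes M$, which finally yields an honest morphism $w\in\C_L(L,M)$ and, via the duality on $\S$, the map $v'$ with $v'f=0$. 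Without this step the lifted element of $\C_0(1,P\otimes M)$ never becomes a morphism in $\C_L$ with source in $\S_L$, and the factorization required by \Lref{limclosed} is never produced. (A minor further point: the closure of $\S_L$ under the finite direct sums needed for \Lref{limclosed} follows simply from closure under extensions; your parenthetical justification via ``images of $1$'' is not meaningful in this setup.)
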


\begin{proof}
By \Lref{limclosed} we need to fill in the dashed part of the following diagram   
$$\xymatrix{
P\ar[r]^u \ar@{-->}[dr]_v & M \\
& L\ar@{-->}[u]_w
}
$$
for some $L\in\S_L$, where $u$ is given with $P$ finitely presented. So let $u$ be given.

By \Pref{fppresentation}, $P$ has a presentation
$$\xymatrix{
L_1\ar[r]^f&L_0\ar[r]^g &P\ar[r]&0
}
$$
with $L_1,L_0\in\S_L$. We have an exact sequence
$$\xymatrix{
0\ar[r]&K\ar[r]^k&L_0^*\ar[r]^{f^*}&L_1^*
},
$$
which, since $-\otimes M$ and $\C_0(1,-)$ are left exact, gives an exact sequence
$$\xymatrix{
0\ar[r]&\C_0(1,K\otimes M)\ar[r]^{k_*}&\C_L(L_0,M)\ar[r]^{f_*}&\C_L(L_1,M)
}
$$
where we have used $\C_0(1,L_j^*\otimes M)\cong \C_L(L_j,M)$ for $j=0,1$.

By completeness of the cotortion pair $(\P_R,\E_R)$, the object $K$ has a presentation
$$\xymatrix{
0\ar[r]&E\ar[r]&L'\ar[r]^\phi &K\ar[r]&0
}
$$
with $L'\in \P_R$ and $E\in\E_R.$
By assumption, $\phi_*=\C_0(1,\phi\otimes M)$ is epi, so we get an exact sequence
$$
\xymatrix{
\C_0(1,L'\otimes M) \ar[r]^-{k_*\phi_*} & \C_L(L_0,M) \ar[r]^{f_*} & \C_L(L_1,M).
}
$$
Now since $f_*(ug)=ugf=0$, we have some $w'\colon1\to L'\otimes M$ such that $(k\phi)_*(w')=ug.$
By \Pref{perplim} we can realize $L'$ as a direct limit $\dlim L^*_i,$ with $L_i\in\S_L.$
This means that we have 
$$L'\otimes M\cong(\dlim L^*_i)\otimes M\cong\dlim(L^*_i\otimes M),$$
as $\otimes$ is right continuous.
Since $1$ is finitely presented, $w'$ factors as
$$\xymatrix{1\ar[r]^-{w}&L^*\otimes M\ar[r]^{\iota\otimes M}\ar[r]&L'\otimes M}$$
for some $L \in \S_L$ and $w\in\C_L(L,M)\cong\C_0(1,L^*\otimes M).$
By the assumed duality between $\S_L$ and $\S_R$ there exists $v'\colon L_0\to L$ such that $v'^*=k\phi\iota$. We now have the commutative diagram
$$
\xymatrix{
\C_L(L,M)\ar[d]^{\iota_*}\ar[dr]^{v'_*} \\
\C_0(1,L'\otimes M)\ar[r]^{k_*\phi_*}&\C_L(L_0,M)\ar[r]^{f_*} &\C_L(L_1,M)
}
$$
where $wv'=v'_*(w)=k_*(\phi_*(w'))=ug.$
This gives us the commutative diagram
$$\xymatrix{
L_1\ar[r]^f\ar[drr]_0&L_0\ar[r]^g\ar[dr]^{v'} &P\ar@{-->}[d]^v\ar[dr]^u\ar[r] & 0\\
&& L\ar[r]^w &M
}$$
where $v'f=0$ since $f^*v'^*=f^*k\phi\iota=0\phi\iota=0$. Thus $v'$ factors through $g$ by some $v$ as $g$ is the cokernel of $f$. It remains to note that $wv=u$, as desired.
\end{proof}

\begin{remark}
  The main difference between this proof and the proof in \cite{holm14} is that
  all the relevant identities have been formalized instead of based on calculations with elements,
  in particular, the use of the generalized Hill Lemma instead of element considerations
  to find the right $\S$-subobject of a semi-projective object.
\end{remark}

Lemma~\ref{weaklim} will allow us to prove that every semi-flat object belongs to the direct limit closure of $\S$ (see Theorem~\ref{stronglim} below). For the converse statement, we need the following setup.

\begin{set}\label{setup2}
  With the notation of \Sref{setup1} assume further that
  $1$ is $\FP_2$, i.e. $\Ext_{\C_0}(1,-)$ respects direct limits, and that
  for any $S\in\S_L$ we have that $-\otimes S$ is exact and there are natural isomorphisms:
  \begin{align*}
    \Ext_{\C_0}(1,S^*\otimes-)&\cong\Ext_{\C_L}(S,-) \quad \text{and}\\
    \Ext_{\C_0}(1,-\otimes S)&\cong\Ext_{\C_R}(S^*,-)\,.
\end{align*}
  By the duality between $\S_L$ and $\S_R$, similar conditions hold for $S\in\S_R$. (Note that the isomorphisms above are the ``Ext versions'' of the isomorphisms from \Sref{setup1}.)
\end{set}

As in \Rref{finiteisfp} one sees that in the setting of \Sref{setup2} every $S \in \S$  is $\FP_2$.

We can now link the direct limit closure to semi-flatness (from Definition~\ref{projacycdef}).

\begin{thm}\label{stronglim}
Let $\C$ and $\S$ be as in \Sref{setup1}.
If $M\in{}\C$ is semi-flat, then $M\in\dlim\S.$
Conversely, if $\C$ and $\S$ satisfy the conditions of \Sref{setup2}, then every $M\in\dlim\S$ is semi-flat.
\end{thm}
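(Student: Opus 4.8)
The plan is to treat the two implications separately, leaning on the machinery already assembled. The forward implication---that a semi-flat $M$ lies in $\dlim\S$---should reduce to checking that such an $M$ satisfies the hypotheses of \Lref{weaklim}, which then yields the conclusion outright. The converse, proved under the stronger \Sref{setup2}, will be handled by writing $M$ as a filtered colimit of objects of $\S$ and transporting both flatness and acyclicity-preservation through that colimit. Following the convention of \Rref{finiteisfp}, I take $M\in\C_L$ throughout.

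For the forward direction, suppose $M$ is semi-flat, so $-\otimes M$ is exact (in particular left exact) and $E\otimes M\in 1^\perp$ for every $E\in\E_R$. To invoke \Lref{weaklim} I must verify that $\C_0(1,\phi\otimes M)$ is epi whenever $\phi$ is epi in $\C_R$ with $\ker\phi\in\E_R$. Given such a $\phi\colon L'\to K$ with $E:=\ker\phi\in\E_R$, flatness of $M$ makes
$$
0\to E\otimes M\to L'\otimes M\xrightarrow{\phi\otimes M}K\otimes M\to 0
$$
exact in $\C_0$. Applying the left exact functor $\C_0(1,-)$ produces a long exact sequence in which the cokernel of $\C_0(1,\phi\otimes M)$ embeds into $\Ext^1_{\C_0}(1,E\otimes M)$; this $\Ext$ group vanishes precisely because $-\otimes M$ preserves acyclicity. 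Hence $\C_0(1,\phi\otimes M)$ is epi, the hypotheses of \Lref{weaklim} are met, and $M\in\dlim\S_L$.

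For the converse, assume the conditions of \Sref{setup2} and write $M=\dlim_i S_i$ as a filtered colimit with each $S_i\in\S_L$. Flatness of $M$ follows because each $-\otimes S_i$ is exact by \Sref{setup2}, right continuity of $\otimes$ identifies $X\otimes M$ with $\dlim_i(X\otimes S_i)$, and filtered colimits are exact in the Grothendieck category $\C_0$; feeding an arbitrary short exact sequence of $\C_R$ through these three facts shows $-\otimes M$ is exact. For acyclicity-preservation, fix $E\in\E_R$. Right continuity gives $E\otimes M\cong\dlim_i(E\otimes S_i)$, and since $1$ is $\FP_2$ the functor $\Ext^1_{\C_0}(1,-)$ commutes with this colimit, so that
$$
\Ext^1_{\C_0}(1,E\otimes M)\cong\dlim_i\Ext^1_{\C_0}(1,E\otimes S_i)\cong\dlim_i\Ext^1_{\C_R}(S_i^*,E),
$$
the last isomorphism being the ``Ext version'' from \Sref{setup2}. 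Each term vanishes because $S_i^*\in\S_R$ while $E\in\E_R=\S_R^\perp$, whence $E\otimes M\in 1^\perp$ and $M$ is semi-flat.

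The substance of the forward direction has already been absorbed into \Lref{weaklim}, so there the only labour is the short diagram chase above and I anticipate no genuine obstacle. The delicate point is the converse, where three interchanges must be licensed at once: right continuity of $\otimes$ to extract $M$ as a colimit, AB5-exactness of filtered colimits to preserve the short exact sequence, and the $\FP_2$ hypothesis on $1$ to move $\Ext^1_{\C_0}(1,-)$ inside the colimit. This last step is the crux---without $1$ being $\FP_2$ one could not conclude that a colimit of vanishing $\Ext$ groups computes $\Ext^1_{\C_0}(1,E\otimes M)$---and it is exactly the extra hypothesis that \Sref{setup2} contributes beyond \Sref{setup1}.
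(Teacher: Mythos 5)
Your proposal is correct and follows essentially the same route as the paper: the forward implication is reduced to verifying the epi hypothesis of \Lref{weaklim} via exactness of $-\otimes M$ and the vanishing of $\Ext^1_{\C_0}(1,E\otimes M)$, and the converse uses exactly the same three interchanges (right continuity of $\otimes$, exactness of direct limits, and the $\FP_2$ hypothesis on $1$) together with the isomorphism $\Ext_{\C_0}(1,E\otimes S)\cong\Ext_{\C_R}(S^*,E)=0$. The only cosmetic difference is that the paper organizes the converse as two closure statements---each $S\in\S$ is semi-flat, and semi-flat objects are closed under direct limits---whereas you inline the computation for $M=\dlim_i S_i$ directly; note also that your final colimit identification needs no naturality in $S_i$, since term-wise vanishing of $\Ext^1_{\C_R}(S_i^*,E)$ already forces the colimit to vanish.
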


\begin{proof}
  To use \Lref{weaklim}, we just need to see, that if $M\in\C_L$ is semi-flat, then $\C_0(1,\phi\otimes_A M)$ is epi whenever 
$\phi$ is epi and $\ker\phi$ is acyclic.
This is clear, since if
$$
\xymatrix{0\ar[r]&E\ar[r]&A\ar[r]^\phi&B\ar[r]&0}
$$
is exact and $E$ is acyclic, then
$$
\xymatrix{0\ar[r]&E\otimes M\ar[r]&A\otimes M\ar[r]^{\phi\otimes M}&B\otimes M\ar[r]&0}
$$
is exact  and $\Ext_{\C_0}(1,E\otimes M)=0.$
But this implies that $\C_0(1,\phi\otimes M)$ is epi. 

For the other direction we show that every $S \in \S_L$ is semi-flat and that the class of semi-flat objects in $\C_L$ is closed under direct limits. First observe that if $E\in \E_R=\S_R^\perp$ and $S\in\S_L$ then 
$$\Ext_{\C_0}(1,E\otimes S)\cong\Ext_{\C_R}(S^*,E)=0\,,$$ so $-\otimes S$ preserves acyclicity, and
since $-\otimes S$ is assumed to be exact, $S$ is semi-flat.
Now if $M_i\in\C_L$ is a direct system of semi-flat objects and $M=\dlim M_i,$
then $-\otimes M$ is exact as $\otimes$ is right continuous and $\dlim(-)$ is exact.
It also preserves acyclicity, as 
$$\Ext_{\C_0}(1,E\otimes\dlim M_i)\cong\Ext_{\C_0}(1,\dlim (E\otimes M_i))\cong\dlim\Ext_{\C_0}(1,E\otimes M_i)=0$$
as $\otimes$ is right continuous,  $\Ext_{\C_0}(1,-)$ respects direct limits and $\Ext_{\C_0}(1,E\otimes M_i)=0.$
Hence $M$ is semi-flat.
\end{proof}

\begin{cor}\label{stronglimcor}
  Let $\C$ and $\S$ be as in \Sref{setup1} and assume further that that $1 \in \C_0$ is projective and that every $S\in\S$ is (tensor-)flat. Then the direct limit closure of $\S$ is the class of (tensor-)flat objects in $\C$.
\end{cor}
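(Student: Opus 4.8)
The plan is to deduce one inclusion from Theorem~\ref{stronglim} and to establish the other by a direct argument, the point being that the extra hypotheses trivialize the homological conditions. First I would record the effect of assuming $1 \in \C_0$ projective: then $\Ext_{\C_0}^1(1,-)=0$, so $1^\perp = \C_0$ and the requirement that $-\otimes M$ preserve acyclicity---namely $(-\otimes M)(\E)\subset 1^\perp$---is vacuous. Hence, as already observed in Example~\ref{flatex}, an object of $\C$ is semi-flat precisely when it is flat, and the corollary reduces to the equality $\dlim\S = \{\text{flat objects in } \C\}$.

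For the inclusion of the flat objects into $\dlim\S$ I would invoke the first assertion of Theorem~\ref{stronglim}, which needs only the data of \Sref{setup1}: any flat $M$ is semi-flat by the preceding paragraph, hence lies in $\dlim\S$.

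The converse inclusion is the step requiring genuine (if mild) care, because the corollary does not assume \Sref{setup2}, so the second half of Theorem~\ref{stronglim} is unavailable. Instead I would use the new hypothesis that each $S\in\S$ is flat. Writing $M = \dlim_i M_i$ with $M_i\in\S$, right continuity of $\otimes$ gives a natural isomorphism $-\otimes M \cong \dlim_i(-\otimes M_i)$ of functors $\C_R\to\C_0$. Each $-\otimes M_i$ is exact, and applying it to a short exact sequence $0\to X'\to X\to X''\to 0$ in $\C_R$ yields a short exact sequence for every $i$; since $\C_0$ is Grothendieck (hence AB5), passing to the direct limit over $i$ preserves exactness, so $0\to X'\otimes M\to X\otimes M\to X''\otimes M\to 0$ is exact. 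Thus $-\otimes M$ is exact, i.e.\ $M$ is flat, which completes the proof.
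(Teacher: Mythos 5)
Your proposal is correct and follows essentially the same route as the paper: identify semi-flat with flat when $1$ is projective (as in Example~\ref{flatex}), apply the first half of \Tref{stronglim} for one inclusion, and prove the other directly from right continuity of $\otimes$ and exactness of direct limits, exactly as in the second half of the proof of \Tref{stronglim}. You merely spell out (correctly) the AB5 argument that the paper leaves implicit.
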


\begin{proof}
  As in \Xref{flatex} semi-flat just means flat if $1$ is projective, so by \Tref{stronglim}
  every flat object in $\C$ is in the direct limit closure of $\S$.
  On the other hand any $S\in\dlim\S$ is flat as this is preserved by direct limits as in the proof of \Tref{stronglim}.
\end{proof}

As mentioned in the Introduction, we will now see how \Tref{stronglim} recovers Govorov and Lazard's original theorem for modules, the theorem by Christensen and Holm
for complexes of modules, the theorem by Oberst and R{\"o}hrl for functor categories, and how it gives new results for graded modules, differential graded modules, and quasi-coherent sheaves.

Most of these examples are built of categories of left/right objects for some monoid in a symmetric monoidal category.
So in the next section we will explain this construction with a new definition of \emph{dualizable} objects in such categories
and show in what cases they satisfy \Sref{setup1} and \ref{setup2}.
Then we will go in depth with the more concrete examples, calculating the different classes of objects.

\section{Concrete module categories}
\label{Concrete-module-categories}
\begin{set}\label{setup3}
The details of this setup can be found in Pareigis~\cite{pareigis86}.
Consider any closed symmetric monoidal abelian category $\C_0=(\C_0,\otimes_1,[-,-],1)$.
A \emph{monoid} (or a \emph{ring object}) in $\C_0$ is an object, $A,$ together with an associative multiplication \mbox{$A\otimes_1 A\to A$} with a unit \mbox{$1\to A$}.
We can then consider the category ${}_A\C$ of \emph{left $A$-modules} whose objects are
objects $X\in\C_0$ equipped with a left $A$-multiplication \mbox{$A\otimes_1 X\to X$} respecting the multiplication of $A$ on the left and the unit.
The morphisms are morphims in $\C_0$ respecting this left $A$-multiplication.
We can also consider the category $\C_A$ of \emph{right $A$-modules}
and the category ${}_A\C_A$ of \emph{$(A,A)$-bimodules}, that is, simultaneously left and right $A$-modules with compatible actions.

We can then construct a functor $\otimes_A\colon\C_A\times {}_A\C\to\C_0$ as a coequalizer:
$$
\xymatrix{
Y\otimes_1 A \otimes_1 X \ar@<-.5ex>[r] \ar@<.5ex>[r]& Y\otimes_1 X\ar[r]& Y\otimes_A X
}.
$$
And we get induced functors ${}_A\C_A\times {}_A\C\to{}_A\C$ and  $\C_A\times {}_A\C_A\to\C_A$ with $A\otimes_A X\cong X$ in ${}_A\C$ and $Y\otimes_A A\cong Y$ in $\C_A$.

We can also construct  ${}_A[-,-]\colon{}_A\C\times{}_A\C\to\C_0$ as an equalizer 
$$
\xymatrix{
{}_A[X,X']\ar[r]&[X,X']\ar@<-.5ex>[r] \ar@<.5ex>[r]&[A\otimes_1 X,X']
}
$$
and similarly for $[-,-]_A\colon \C_A\times\C_A\to\C_0$. Again we get induced functors ${}_A[-,-]\colon{}_A\C\times{}_A\C_A\to\C_A$ and $[-,-]_A\colon \C_A\times{}_A\C_A\to{}_A\C$. 

There are natural isomorphisms:
\begin{align*}
{}_A\C(X\otimes_1 Z,X')&\cong\C_0(Z,{}_A[X,X'])\,, \\ 
\C_A(Z\otimes_1 Y,Y')&\cong\C_0(Z,[Y,Y']_A)\,, \text{ and} \\ 
{}_A[X\otimes_1 Z,X']&\cong [Z,{}_A[X,X']]\,. 
\end{align*}
That is, \mbox{$X\otimes_1-$} and \mbox{${}_A[X,-]$} (as well as \mbox{$-\otimes_1 Y$} and \mbox{$[Y,-]_A)$} are adjoints. Similarly, $-\otimes_A X$ and $[X,-]$ (as well as $Y\otimes_A -$ and $[Y,-]$) are adjoints.
We denote the unit and the counit of the adjunctions by $\eta$ and $\eps.$
As $A\in{}_A\C_A,$ we can define functors $(-)^*={}_A[-,A]\colon{}_A\C\to\C_A$ and  $(-)^*=[-,A]_A\colon\C_A\to{}_A\C$
with $A^*\cong A$, where on one side, $A$ is regarded as a left $A$-module, and on the other side, $A$ is regarded as a right $A$-module. Also notice that ${}_1\C\cong\C_0\cong\C_1.$
Again all the details are in \cite{pareigis86}.

In accordance with our convention from \Sref{setup1}, we often write $\C$ for either ${}_A\C$ or $\C_A$.
\end{set}

The forgetful functor from ${}_A\C\to\C_0$ creates limits, colimits and isomorphisms \cite[2.4]{pareigis86} and thus we get:

\begin{prop}\label{acisab}
  If $\,\C_0$ is Grothendieck generated by a collection $\{X\}$ of (finitely~pre\-sen\-ted) objects, then ${}_A\C$ is Grothendieck generated by the collection $\{A\otimes_1 X\}$ of~(fi\-nite\-ly presented) objects.
\end{prop}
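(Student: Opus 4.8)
The plan is to exploit the free–forgetful adjunction $A\otimes_1-\colon\C_0\rightleftarrows{}_A\C\,\colon\!U$, where $U$ is the forgetful functor and $A\otimes_1 X$ carries the left $A$-module structure coming from the multiplication on $A$. This adjunction is a special case of the hom-isomorphism recorded in \Sref{setup3}: taking $X=A$ in ${}_A\C(X\otimes_1 Z,X')\cong\C_0(Z,{}_A[X,X'])$ and using ${}_A[A,X']\cong UX'$ gives a natural isomorphism
$${}_A\C(A\otimes_1 X,M)\cong\C_0(X,UM).$$
It is through this isomorphism that every relevant property of $\C_0$ will be transported to ${}_A\C$. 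The three things to check are that ${}_A\C$ is Grothendieck, that $\{A\otimes_1 X\}$ generates it, and that each $A\otimes_1 X$ is finitely presented when $X$ is.

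First I would record that ${}_A\C$ is Grothendieck. Since $U$ creates limits, colimits and isomorphisms, kernels and cokernels in ${}_A\C$ are computed in $\C_0$, so ${}_A\C$ is abelian and $U$ is exact. Filtered colimits are likewise computed by $U$, and because they are exact in the Grothendieck category $\C_0$, they remain exact in ${}_A\C$; thus filtered colimits are exact. A generator is then obtained as the coproduct of the small generating set established below.

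Next, for the generating property I would use that $U$ is faithful. Given distinct parallel maps $f,g\colon M\to N$ in ${}_A\C$, faithfulness gives $Uf\neq Ug$, so since $\{X\}$ generates $\C_0$ there is some $\tau\colon X\to UM$ with $(Uf)\tau\neq(Ug)\tau$. Transposing $\tau$ across the adjunction yields $\sigma\colon A\otimes_1 X\to M$, and naturality of the adjunction isomorphism in the second variable identifies the transpose of $f\sigma$ with $(Uf)\tau$ and that of $g\sigma$ with $(Ug)\tau$; since these differ and the transposition is a bijection, $f\sigma\neq g\sigma$, as required by \Dref{defn:generate}.

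Finally, finite presentation follows from the same adjunction: ${}_A\C(A\otimes_1 X,-)\cong\C_0(X,U(-))$, and $U$ preserves filtered colimits (it creates them), while $\C_0(X,-)$ commutes with filtered colimits because $X$ is finitely presented in $\C_0$; hence the composite commutes with filtered colimits and $A\otimes_1 X\in\fp{{}_A\C}$. I expect no genuine obstacle here—the only point demanding care is confirming that the unit $1\to A$ and multiplication $A\otimes_1A\to A$ really do make $A\otimes_1-$ left adjoint to $U$ with the stated hom-isomorphism, after which the faithfulness-plus-adjunction argument for generation is the conceptual heart of the proof.
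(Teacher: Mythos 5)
Your proposal is correct and follows essentially the same route as the paper: both transport generation and finite presentation through the free--forgetful adjunction $A\otimes_1-\dashv U$, with the paper's non-zero composite $X \to A\otimes_1 X \to A\otimes_1 Y \to Y \to Y'$ being exactly the concrete form of your transposition-plus-naturality argument, and the finite presentation step (via $U\cong{}_A[A,-]$ preserving filtered colimits) being identical. Your explicit verification that ${}_A\C$ is Grothendieck is slightly more detailed than the paper, which delegates this to Pareigis's creation of limits and colimits, but the substance is the same.
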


\begin{proof}
 We only prove the assertion about the generators (see Definition~\ref{defn:generate}). Assume that $\C_0$ is generated by $\{X\}$. Let \mbox{$Y \to Y'$} be a non-zero morphism in ${}_A\C$. Then \mbox{$Y \to Y'$} is also non-zero in $\C_0$, so we can find some $X$ in the collection $\{X\}$ and a morphism \mbox{$f \colon X \to Y$} such that \mbox{$X \to Y \to Y'$} is non-zero in $\C_0$. Now the morphism \mbox{$X \to A \otimes_1 X \to A \otimes_1 Y \to Y \to Y'$} is non-zero as \mbox{$X \to A \otimes_1 X \to A \otimes_1 Y \to Y$} is equal to $f$, and hence \mbox{$A \otimes_1 X \to A \otimes_1 Y \to Y \to Y'$} must be non-zero as well. Thus the collection  $\{A\otimes_1 X\}$ generates  ${}_A\C$. If $X$ is finitely presented in $\C_0$, then $A\otimes_1 X$ is finitely presneted in ${}_A\C$ since ${}_A\C(A\otimes_1 X,-) \cong \C_0(X,{}_A[A,-])$ and the forgetful functor ${}_A[A,-] \colon {}_A\C\to\C_0$ preserves colimits.
\end{proof}

\subsection{Dualizable objects}
\label{subsec:dualizable}

In \cite[III\sectionsymbol 1]{equiva} Lewis and May define \emph{finite} objects in a closed symmetric monoidal category.
Such objects are called \emph{(strongly) dualizable} in
Hovey, Palmieri, and Strickland \cite{HPS97}.
We extend this notion to categories of left/right modules over a monoid in a closed symmetric monoidal category by the following definition.

First, $\varepsilon$ (introduced above) induces a map
\begin{displaymath}
  \C_0(Z,X^* \otimes_A X') \longrightarrow {}_A\C(X \otimes_1 Z,X'),
\end{displaymath}
for any $X,X' \in {}_A\C$ and $Z \in \C_0$, by $X \otimes_1 Z \longrightarrow X \otimes_1 X^* \otimes_A X' \xrightarrow{\varepsilon \otimes_A X'} X'$.

Next,  $\varepsilon$ induces a morphism
\begin{displaymath}
  \nu \colon {}_A[X,Z] \otimes_A X' \longrightarrow {}_A[X,Z\otimes_A X'],
\end{displaymath}
for any $X,X' \in {}_A\C$ and $Z \in {}_A\C_A$, 
by the adjoint of $X \otimes_1 {}_A[X,Z] \otimes_A X' \xrightarrow{\varepsilon \otimes_A X'} Z \otimes_A X'$. We can now give the following:

\begin{defn}\label{finitedef}
  An object $X\in{}_A\C$ is said to be \emph{dualizable} if 
  there exists a morphism \smash{$\eta' \colon 1 \to X^* \otimes_A X$} in $\C_0$ such that the following diagram commutes:
    \begin{displaymath}
      \xymatrix{
        1 \ar[d]_-{\eta} \ar[r]^-{\smash{\eta'}} & X^* \otimes_A X \ar[dl]^-{\nu}
        \\
        {}_A[X,X] & {}
      }
    \end{displaymath}
Similarly, one defines what it means for an object in $\C_A$ to be dualizable.
\end{defn}

Note that $A \in {}_A\C$ and $A \in \C_A$ are always dualizable.

Many equivalent descriptions of dualizable objects can be given, and we give several in the next lemma.

\begin{lemma}\label{finitedeflem}\label{fdl}
  For $X\in{}_A\C$ the following conditions are equivalent:
  \begin{enumerate}
    \setlength{\itemsep}{2pt}
    \item \label{one} There exists a morphism $\eta' \colon 1 \to X^* \otimes_A X$ in $\C_0$ making the following diagram commute:
\begin{displaymath}
  \xymatrix@!=1.2pc{
  X \ar[dr]_-{=} 
  \ar[rr]^-{X \otimes_1 \eta'} & & 
  X \otimes_1 X^* \otimes_A X
  \ar[dl]^-{\varepsilon \otimes_A X}
  \\ 
  {} & X & {} 
  }
\end{displaymath}    

    \item \label{two} $\C_0(1, X^* \otimes_A X) \xrightarrow{\smash{\cong}} \hspace*{-1.5pt}{}_A\C(X,X)$ induced by $\varepsilon$. 
  
    \item \label{three} $\C_0(1, X^* \otimes_A X') \xrightarrow{\smash{\cong}} \hspace*{-1.5pt}{}_A\C(X,X')$ induced by $\varepsilon$ for all $X' \in {}_A\C$. 
    
    \item \label{four} $\C_0(Z, X^* \otimes_A X') \xrightarrow{\smash{\cong}} \hspace*{-1.5pt}{}_A\C(X \otimes_1 Z,X')$ induced by $\varepsilon$ for all $X' \in \hspace*{-1.5pt}{}_A\C$ and~\mbox{$Z \in \C_0$}. 
      
    \item \label{five} \mbox{$\C_0(Z, Y \otimes_A X') \xrightarrow{\smash{\cong}} \hspace*{-1.5pt}{}_A\C(X \otimes_1 Z,X')$} for some $Y \in \C_A$ and all \mbox{$X' \in {}_A\C, Z \in \C_0$}. 

    \item \label{six} $X$ is dualizable.
    
    \item \label{seven} $\nu \colon X^* \otimes_A X \xrightarrow{\smash{\cong}} {}_A[X,X]$.

    \item \label{eight} $\nu \colon {}_A[X,Z] \otimes_A X' \xrightarrow{\smash{\cong}} {}_A[X,Z\otimes_A X']$ for all $X' \in {}_A\C$ and $Z \in {}_A\C_A$.

    \item \label{nine} $\nu \colon {}_A[X',Z] \otimes_A X \xrightarrow{\smash{\cong}} {}_A[X',Z\otimes_A X]$ for all $X' \in {}_A\C$ and $Z \in {}_A\C_A$.
    
  \end{enumerate}
\end{lemma}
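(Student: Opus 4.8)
The plan is to run a single cycle of implications linking the ``coevaluation'' conditions (1), (2), (3), (4), (5), (6) to the ``$\nu$-isomorphism'' conditions (7), (8), (9), with one genuinely computational step at its heart. The organizing tool is the defining adjunction ${}_A\C(X\otimes_1 Z,X')\cong\C_0(Z,{}_A[X,X'])$ from \Sref{setup3}. First I would record the translation between the two types of map: for fixed $X'$, the $\varepsilon$-induced map $\theta_{Z,X'}\colon\C_0(Z,X^*\otimes_A X')\to{}_A\C(X\otimes_1 Z,X')$ of (4) corresponds, under this adjunction, to $\C_0(Z,\nu)$, where $\nu\colon X^*\otimes_A X'\to{}_A[X,X']$ is the canonical map (the case $Z=A$ of the general $\nu$). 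The verification is a naturality chase using the definition of $\nu$ as the adjoint of $\varepsilon\otimes_A X'$. By Yoneda in $Z$ this gives at once that $\theta_{Z,X'}$ is bijective for all $Z$ if and only if $\nu\colon X^*\otimes_A X'\xrightarrow{\cong}{}_A[X,X']$. Hence (4) is equivalent to ``$\nu$ is iso for all $X'$'', while (7) is the special case $X'=X$, (3) is $Z=1$, and (2) is $Z=1,X'=X$.

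With this dictionary the easy implications fall out. The chains (4)$\Rightarrow$(3)$\Rightarrow$(2) and (4)$\Rightarrow$(7) are specializations, and (4)$\Rightarrow$(5) takes $Y=X^*$. For (2)$\Rightarrow$(1) and (2)$\Rightarrow$(6): under the adjunction $\mathrm{id}_X$ corresponds to the unit $\eta\colon1\to{}_A[X,X]$, so the preimage $\eta'$ of $\mathrm{id}_X$ under the bijection $\theta_{1,X}$ satisfies both $(\varepsilon\otimes_A X)(X\otimes_1\eta')=\mathrm{id}_X$ and $\nu\eta'=\eta$, which are exactly (1) and (6). The same naturality shows (1)$\Leftrightarrow$(6) directly, the two triangles being adjoint to one another. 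Finally (7)$\Rightarrow$(2) is immediate from the dictionary (taking $Z=1$), and likewise (8)$\Rightarrow$(7) and (9)$\Rightarrow$(7) by setting $Z=A$, $X'=X$.

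The crux is the reverse implication (6)$\Rightarrow$(4), i.e.\ that a single triangle identity forces $\nu\colon X^*\otimes_A X'\to{}_A[X,X']$ to be an isomorphism for \emph{every} $X'$. Here I would exhibit an explicit two-sided inverse $\mu$ built from the coevaluation $\eta'$ and the evaluation $\varepsilon$: namely the composite ${}_A[X,X']\cong 1\otimes_1{}_A[X,X']\xrightarrow{\eta'\otimes_1 1}(X^*\otimes_A X)\otimes_1{}_A[X,X']\cong X^*\otimes_A(X\otimes_1{}_A[X,X'])\xrightarrow{X^*\otimes_A\varepsilon}X^*\otimes_A X'$. Checking $\mu\nu=\mathrm{id}$ will use the given triangle identity (6)/(1), while $\nu\mu=\mathrm{id}$ will use the counit triangle identity of the ambient adjunction $X\otimes_1-\dashv{}_A[X,-]$, which always holds because $X^*={}_A[X,A]$ and $\varepsilon$ is the canonical evaluation; this is precisely why only one triangle needs to be assumed. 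I expect this verification, together with keeping track of the left/right $A$-actions across the associativity isomorphism $(X^*\otimes_A X)\otimes_1 W\cong X^*\otimes_A(X\otimes_1 W)$, to be the main obstacle.

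Two loose ends remain. For (5)$\Rightarrow$(6): condition (5) says $X\otimes_1-$ has a right adjoint of the form $Y\otimes_A-$; by uniqueness of adjoints $Y\otimes_A-\cong{}_A[X,-]$, and the unit and counit of the adjunction $X\otimes_1-\dashv Y\otimes_A-$ provide a coevaluation $1\to Y\otimes_A X$ and an evaluation $X\otimes_1 Y\to A$ satisfying the triangle identities, exhibiting $(X,Y)$ as a dual pair; standard duality then identifies $Y\cong{}_A[X,A]=X^*$ compatibly with $\varepsilon$, turning the coevaluation into an $\eta'$ as in (6). For (4)$\Rightarrow$(8) and (4)$\Rightarrow$(9): once $\nu\colon X^*\otimes_A W\xrightarrow{\cong}{}_A[X,W]$ is known for all $W\in{}_A\C$, the projection formula (8) follows by factoring ${}_A[X,Z\otimes_A X']\cong X^*\otimes_A Z\otimes_A X'\cong{}_A[X,Z]\otimes_A X'$ using associativity of $\otimes_A$ and the right-$A$-linearity of $\nu$, and (9) follows from the mirror construction, building the inverse of $\nu$ in the tensor variable from $\eta'$ and $\varepsilon$ exactly as in the crux step; in both cases one checks the resulting isomorphism is the canonical $\nu$. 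This closes the cycle, so all nine conditions are equivalent.
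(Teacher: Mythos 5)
Your overall architecture is sound and, in outline, quite close to the paper's own proof: the specializations among \eqref{two}--\eqref{four} and \eqref{seven}, the adjointness argument for \eqref{one} $\Leftrightarrow$ \eqref{six}, obtaining \eqref{seven} from \eqref{eight} or \eqref{nine} at $Z=A$, $X'=X$, and the Lewis--May-style treatments of \eqref{five} $\Rightarrow$ \eqref{six} and of \eqref{eight}/\eqref{nine} all match the paper, which cites \cite[III Thm.~1.6]{equiva} and \cite[III Prop.~1.3(ii)]{equiva} for exactly those steps. Moreover your candidate inverse $\mu = (X^*\otimes_A \varepsilon)\circ(\eta'\otimes_1 {}_A[X,X'])$ is the object-level version of the inverse the paper uses for \eqref{one} $\Rightarrow$ \eqref{four}, namely $f \mapsto (X^*\otimes_A f)\circ(\eta'\otimes_1 Z)$.

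But your crux step contains a genuine error: the division of labour between the two composites is backwards, and the half you declare automatic is not. Writing $\mathrm{ev}\colon X\otimes_1 {}_A[X,W] \to W$ for the counit of the ambient adjunction, the composite $\nu\circ\mu$ is the one that follows from the \emph{assumed} triangle \eqref{one}: its adjoint is $\mathrm{ev}\circ(X\otimes_1 \nu\mu) = (\varepsilon\otimes_A X')\circ(X\otimes_1\mu) = \mathrm{ev}\circ\bigl([(\varepsilon\otimes_A X)(X\otimes_1\eta')]\otimes_1 {}_A[X,X']\bigr) = \mathrm{ev}$, whence $\nu\circ\mu = \mathrm{id}$. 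The other composite computes, by naturality of $\eta'\otimes_1 -$, to $\mu\circ\nu = h\otimes_A X'$ where $h = (X^*\otimes_A\varepsilon)\circ(\eta'\otimes_1 X^*)\colon X^* \to X^*$, and $h = \mathrm{id}_{X^*}$ is precisely the \emph{second} triangle identity of a dual pair. This is not an instance of the triangle identities of the ambient adjunction $X\otimes_1 - \dashv {}_A[X,-]$ (those involve the ambient unit $Z \to {}_A[X,X\otimes_1 Z]$, not your $\eta'$), and for an arbitrary morphism $\eta'\colon 1 \to X^*\otimes_A X$ it fails; so the step as you planned it would not go through. The identity does hold here, but only by a further argument that again uses \eqref{one} together with the canonicity of $\varepsilon$: for instance, $\varepsilon\circ(X\otimes_1 h) = \varepsilon\circ\bigl([(\varepsilon\otimes_A X)(X\otimes_1\eta')]\otimes_1 X^*\bigr) = \varepsilon$ by the interchange law and \eqref{one}, and morphisms into $X^* = {}_A[X,A]$ are determined by their adjoints; alternatively, specialize $\nu\circ\mu=\mathrm{id}$ at $X'=A$, where $\nu$ is the unit isomorphism $X^*\otimes_A A \cong X^*$, to conclude $h=\mathrm{id}$. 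With this identity supplied, your \eqref{six} $\Rightarrow$ \eqref{four} closes and the rest of your cycle stands; note that the paper's own justification of \eqref{one} $\Rightarrow$ \eqref{four} (``the diagram from \eqref{one} precisely says that it is an inverse'') is terse at exactly the same point, since the diagram directly yields only one of the two composites.
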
                     

\begin{proof}
  Clearly \eqref{four} $\Rightarrow$ \eqref{three} $\Rightarrow$ \eqref{two} $\Rightarrow$ \eqref{one}, and \eqref{one} $\Rightarrow$ \eqref{four} as $\eta'$ from \eqref{one} induces a map ${}_A\C(X\otimes_1Z,X') \to \C_0(Z,X^* \otimes_AX')$ by $Z \to X^* \otimes_A X \otimes_1 Z \to X^* \otimes_A X'$, and the diagram from \eqref{one} precisely says that it is an inverse to the map induced by $\varepsilon$. Clearly, either of the conditions \eqref{eight} and \eqref{nine} imply \eqref{seven}, and \eqref{seven} $\Rightarrow$ \eqref{six}. The implications \eqref{six} $\Rightarrow$ \eqref{eight} and \eqref{six} $\Rightarrow$ \eqref{nine} can be proved as in \cite[III Prop.~1.3(ii)]{equiva}. We also have \eqref{one} $\Leftrightarrow$ \eqref{six} as the diagrams in question are adjoint, so we are left with noting that \eqref{four} $\Rightarrow$ \eqref{five} (trivial) and that \eqref{five} $\Rightarrow$ \eqref{six} can be proved as in \cite[III Thm.~1.6]{equiva}.
\end{proof}

\begin{remark}\label{finiterem}
  We notice that \Lref{finitedeflem}~\eqref{five} makes no mention of the functor $[-,-]$ and thus this condition can be used to define dualizable objects in, for example, symmetric monoidal categories that are not closed. In this case, $Y$ is a ``dual''~of~$X$.
  We chose a definition with a fixed dual object, $X^*={}_A[X,A]$, because this emphasizes the canonical and thereby functorial choice of a dual object.
\end{remark}

Next we show three lemmas about closure properties for the class of dualizable objects.

\begin{lemma}\label{biduality}
$(-)^*$ induces a duality between the categories of dualizable objects~in~${}_A\C$ and dualizable objects in $\C_A$.
In particular, if $X$ is dualizable, then so is $X^*$~and~the adjoint of $\eps$ gives an isomorphism \smash{$X \stackrel{\cong}{\longrightarrow} X^{**}$}.
\end{lemma}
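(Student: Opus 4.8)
The plan is to reduce the statement to two claims: (a) the dual $X^*$ of a dualizable object $X\in{}_A\C$ is again dualizable, now as an object of $\C_A$; and (b) the biduality morphism $X\to X^{**}$, defined as the adjoint of the evaluation $\varepsilon\colon X\otimes_1 X^*\to A$ under the adjunction $(-)\otimes_1 X^*\dashv[X^*,-]_A$, is an isomorphism. Granting these, the conclusion is formal: $(-)^*$ is a contravariant functor ${}_A\C\to\C_A$ which by (a) restricts to the full subcategories of dualizable objects, the mirror construction gives $(-)^*\colon\C_A\to{}_A\C$, and (b) together with its left–right mirror supplies natural isomorphisms $\Id\cong(-)^{**}$ on both sides, so that $(-)^*$ is an anti-equivalence, i.e.\ a duality. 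Throughout I would use the left–right mirror of \Lref{finitedeflem}, whose proof is identical, to characterize dualizable objects of $\C_A$.

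For (a) I would verify the mirror of condition \Lref{finitedeflem}\eqref{five} for $U=X^*\in\C_A$ with the candidate dual $V=X\in{}_A\C$: that the map $\C_0(Z,U'\otimes_A X)\to\C_A(Z\otimes_1 X^*,U')$ induced by $\varepsilon$ is a natural isomorphism for all $U'\in\C_A$ and $Z\in\C_0$. Via the adjunction $(-)\otimes_1 X^*\dashv[X^*,-]_A$ this is equivalent to showing the comparison morphism $U'\otimes_A X\to[X^*,U']_A$ is an isomorphism, that is, the right-module analogue of the isomorphism $\nu$ of \Lref{finitedeflem}\eqref{seven} and \eqref{eight}. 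Concretely, dualizability of $X$ furnishes a coevaluation $\eta'\colon 1\to X^*\otimes_A X$, and $\varepsilon$ plays the role of an evaluation; condition \eqref{one} is exactly one of the two triangle identities, and what remains is the complementary identity $(X^*\otimes_A\varepsilon)\circ(\eta'\otimes_1 X^*)=\mathrm{id}_{X^*}$, which makes $(\eta',\varepsilon)$ a full duality pairing between $X$ and $X^*$ and thereby witnesses dualizability of $X^*$.

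I expect this complementary triangle identity to be the main obstacle, since only one of the two identities is built into \Dref{finitedef}. I would extract it either from the battery of equivalent conditions in \Lref{finitedeflem} — using that the isomorphism $\nu$ of \eqref{seven} together with the naturality of $\varepsilon$ pins down $\eta'$ rigidly enough to force the second identity — or by transcribing the argument of \cite[III Prop.~1.3 and Thm.~1.6]{equiva} into the relative setting. The one point requiring care is that $\otimes_A$ is not symmetric, so one cannot invoke a symmetry of $\otimes_A$ directly; instead the symmetry of $\otimes_1$ must be combined with the left/right module bookkeeping, keeping track of which tensor factor carries which $A$-action at each stage.

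With (a) in hand, (b) is formal. For the now dualizable $X^*$ with dual $X$, the mirror of \eqref{five} yields natural isomorphisms
$$\C_0(Z,X^{**})\cong\C_A(Z\otimes_1 X^*,A)\cong\C_0(Z,A\otimes_A X)\cong\C_0(Z,X),$$
the first by the defining adjunction of $[X^*,-]_A$, the second by dualizability of $X^*$ (with $U'=A$), and the last since $A\otimes_A X\cong X$. Tracing $\varepsilon$ through these identifications shows the composite is precisely post-composition with the adjoint $\bar\varepsilon\colon X\to X^{**}$ of $\varepsilon$, so $\bar\varepsilon$ is an isomorphism by the Yoneda lemma; it is ${}_A\C$-linear and natural in $X$ because $\varepsilon$ is a natural counit. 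Combining (a), (b), and their mirror statements then gives that $(-)^*$ is a duality between the dualizable objects of ${}_A\C$ and of $\C_A$, as claimed.
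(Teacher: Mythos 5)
Your proposal is correct and follows essentially the same route as the paper, whose entire proof is the citation ``As in \cite[Prop.~1.3(i)]{equiva}'': you propose precisely to transcribe that Lewis--May argument (together with the \eqref{five}\,$\Rightarrow$\,\eqref{six} step from \Lref{finitedeflem}, itself proved via \cite[III Thm.~1.6]{equiva}) into the relative setting, and you correctly identify the one genuine subtlety --- deriving the complementary triangle identity $(X^*\otimes_A\varepsilon)\circ(\eta'\otimes_1 X^*)=\mathrm{id}_{X^*}$ without a symmetry on $\otimes_A$, which does go through, e.g.\ by checking that this composite equals ${}_A[t,A]$ where $t=(\varepsilon\otimes_A X)\circ(X\otimes_1\eta')=\mathrm{id}_X$ is the given triangle from \Dref{finitedef}.
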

\begin{proof}
  As in \cite[Prop. 1.3(i)]{equiva}.
\end{proof}
\begin{lemma}\label{finiteext}
   Dualizable objects are closed under extensions and direct summands.
\end{lemma}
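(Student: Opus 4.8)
The plan is to treat the two closure properties separately, relying throughout on the equivalent descriptions in \Lref{finitedeflem}. The one preliminary I would record first is that a dualizable object is ``$\otimes$-exact'': if $D\in{}_A\C$ is dualizable then \eqref{eight} (with $Z=A$) gives a natural isomorphism $\nu\colon D^*\otimes_A Y\xrightarrow{\cong}{}_A[D,Y]$, and since $D^*\otimes_A-$ is right exact (a tensor product preserves cokernels) while ${}_A[D,-]$ is left exact (it is a right adjoint), both functors are in fact exact; likewise $-\otimes_A D$ is exact, and the analogous statements hold for dualizable objects of $\C_A$ by \Lref{biduality}. In particular $\mathrm{Tor}^A_{\geq 1}(W,-)=0$ whenever $W\in\C_A$ is dualizable.

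For direct summands I would use characterization \eqref{one}. Suppose $X$ is a retract of a dualizable object $W$, with $i\colon X\to W$, $r\colon W\to X$ and $ri=\mathrm{id}_X$, and let $\eta'_W\colon 1\to W^*\otimes_A W$ be a coevaluation for $W$. I transport $\eta'_W$ along the retract, setting
\[
\eta'_X \;=\; (i^*\otimes_A r)\circ\eta'_W\colon 1\longrightarrow X^*\otimes_A X,
\]
where $i^*\colon W^*\to X^*$ is the functorial dual of $i$. Verifying the triangle of \eqref{one} for $\eta'_X$ is then a formal computation: using the dinaturality of $\varepsilon$ (the identity $\varepsilon_X\circ(X\otimes_1 i^*)=\varepsilon_W\circ(i\otimes_1 W^*)$) together with bifunctoriality of $\otimes_1$ and $\otimes_A$, the composite $(\varepsilon_X\otimes_A X)\circ(X\otimes_1\eta'_X)$ collapses to $r\circ\big[(\varepsilon_W\otimes_A W)\circ(W\otimes_1\eta'_W)\big]\circ i=r\circ\mathrm{id}_W\circ i=\mathrm{id}_X$, so $X$ is dualizable. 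This step is routine and carries no real difficulty.

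For extensions, let $0\to X'\xrightarrow{\iota}X\xrightarrow{\pi}X''\to 0$ be exact in ${}_A\C$ with $X'$ and $X''$ dualizable; I aim to verify \eqref{seven}, i.e.\ that $\nu_X\colon X^*\otimes_A X\to{}_A[X,X]$ is an isomorphism, by comparing $\nu$ across the sequence. For a fixed $Y\in{}_A\C$ the assignment $\nu_{-,Y}$ is a natural transformation between the contravariant functors $(-)^*\otimes_A Y$ and ${}_A[-,Y]$. Applying ${}_A[-,Y]$ gives the left exact row $0\to{}_A[X'',Y]\to{}_A[X,Y]\to{}_A[X',Y]$. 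Applying $(-)^*\otimes_A Y$ gives, \emph{provided} the dualized sequence $0\to(X'')^*\to X^*\to(X')^*\to 0$ is short exact, the row $0\to(X'')^*\otimes_A Y\to X^*\otimes_A Y\to(X')^*\otimes_A Y\to 0$, which is again short exact because $\mathrm{Tor}^A_1((X')^*,Y)=0$ by the preliminary. Since $\nu_{X',Y}$ and $\nu_{X'',Y}$ are isomorphisms, a diagram chase — which needs only that the bottom row is short exact and the top row left exact — shows $\nu_{X,Y}$ is an isomorphism for every $Y$; taking $Y=X$ gives \eqref{seven}, so $X$ is dualizable.

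The one nontrivial input, and where I expect the real work to sit, is the exactness of the dualized sequence, i.e.\ the surjectivity of $\iota^*\colon X^*\to(X')^*$ (left exactness of $(-)^*={}_A[-,A]$ being automatic). This is the vanishing of $\operatorname{coker}\iota^*$, an internal $\Ext^1(X'',A)$ in $\C_0$, and it is precisely the abstract form of the fact that a short exact sequence with dualizable cokernel is ``pointwise split'': literally split in $A\text{-Mod}$ (where $X''$ is projective), degreewise split in $\Ch(A)$ and $A\text{-DGMod}$, and locally split for locally free sheaves of finite rank. I would derive it from the dualizability of $X''$, exploiting that ${}_A[X'',-]\cong(X'')^*\otimes_A-$ is exact so that this internal $\Ext^1$ vanishes; the main care is to make this vanishing rigorous in a general Grothendieck $\C_0$, where one cannot resolve in the first variable and must argue directly through the exactness of the functor ${}_A[X'',-]$ rather than through a balancing of derived functors.
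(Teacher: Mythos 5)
Your direct-summand argument is correct and routine---it is the coevaluation-transport version of the paper's one-line appeal to \Lref{finitedeflem}~\eqref{three}---and your preliminary observation that $\nu\colon D^*\otimes_A-\cong{}_A[D,-]$ forces both functors to be exact for dualizable $D$ is sound. The extension half, however, has a genuine gap, and it sits exactly where you predicted: the surjectivity of $\iota^*\colon X^*\to(X')^*$. The derivation you sketch does not work. Exactness of ${}_A[X'',-]\cong(X'')^*\otimes_A-$ is exactness in the \emph{second} variable of the internal hom, and it gives no formal control over the first-variable obstruction $\operatorname{coker}(\iota^*)$ (your ``internal $\Ext^1(X'',A)$''): there is no balancing of derived functors available in \Sref{setup3}, which is an arbitrary closed symmetric monoidal abelian category with a monoid---no projectives, flats, or resolutions are assumed at this point (the paper only manufactures enough $\otimes_1$-flats much later, in the proof of \Tref{concretemodelexample}(2), under the \emph{extra} hypothesis that $\C_0$ is generated by dualizables), so your $\mathrm{Tor}^A$ is not even defined at this level of generality. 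In each concrete example the dualized sequence is exact for reasons invisible to the abstract setting: finitely generated projective quotients split in $A$-Mod, perfect quotients give degreewise split sequences in $\Ch(A)$, locally free quotients split locally. That the truth of your key step rests on such example-specific mechanisms is a warning that it needs an idea you have not supplied.

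The paper avoids the problem entirely by comparing along the \emph{second} variable: fix the extension (call it $X_2$, your $X$) in the first slot of $\nu$ and run the sequence through the second slot, giving the right exact row $X_2^*\otimes_AX_1\to X_2^*\otimes_AX_2\to X_2^*\otimes_AX_3\to0$ over the left exact row $0\to{}_A[X_2,X_1]\to{}_A[X_2,X_2]\to{}_A[X_2,X_3]$, with outer verticals isomorphisms by \Lref{finitedeflem}~\eqref{nine}, since there it is $X_1$ and $X_3$---the dualizable objects---that are being \emph{tensored}; the same four-term chase you describe then yields criterion \eqref{seven} for $X_2$, and the exactness of both rows is automatic (right exactness of $X_2^*\otimes_A-$ and left exactness of ${}_A[X_2,-]$). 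No dualized sequence, no purity, no Tor. Incidentally, even granting your dualized short exact sequence, the Tor step in your argument is superfluous: your chase never uses injectivity of $(X'')^*\otimes_AY\to X^*\otimes_AY$, only the exactness of $(X'')^*\otimes_AY\to X^*\otimes_AY\to(X')^*\otimes_AY\to0$, which any right exact functor applied to $(X'')^*\to X^*\to(X')^*\to0$ preserves. So the one missing piece really is the epimorphy of $\iota^*$, and the repair is not to prove it but to rotate your comparison diagram into the other variable, as the paper does.
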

\begin{proof}
The closure under direct summands follows directly from \Lref{finitedeflem}~\eqref{three}.

So assume that
$$
\xymatrix{
0 \ar[r] & X_1 \ar[r] & X_2\ar[r] & X_3\ar[r] & 0
} $$
is exact and $X_1, X_3$ are dualizable (in ${}_A\C$).
Then we have the following commutative diagram in $\C_0$ with exact rows
$$\xymatrix{
 & X_2^*\otimes_AX_1 \ar[r]\ar[d]_{\simeq} & X_2^*\otimes_AX_2\ar[r]\ar[d] & X_2^*\otimes_AX_3\ar[r]\ar[d]_{\simeq} & 0 \\
0\ar[r] & {}_A[X_2,X_1] \ar[r] & {}_A[X_2,X_2]\ar[r] & {}_A[X_2,X_3]\,, &
}$$
where the outer vertical morphisms are isomorphisms by \Lref{finitedeflem}~\eqref{nine},
so the middle morphism is an isomorphism by the snake lemma. Hence $X_2$ is dualizable by \Lref{finitedeflem}~\eqref{seven}.
\end{proof}

\begin{lemma}\label{finiteclosedundertensor}
If $S$ is dualizable in $\C_0$, then $$(X\otimes_1 S)^*\cong S^*\otimes_1 X^*$$ for any $X\in{}_A\C$.
If $X\in{}_A\C$ is dualizable then so is $X\otimes_1 S\in{}_A\C$.
In particular, $A\otimes_1 S\in{}_A\C$ and $(A\otimes_1 S)^*\cong S^*\otimes_1 A\in\C_A$ are dualizable if $S\in\C_0$ is dualizable. 
\end{lemma}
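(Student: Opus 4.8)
The plan is to prove the displayed isomorphism first, and then read off dualizability of $X \otimes_1 S$ directly from it via the equivalent conditions collected in \Lref{fdl}, so that the ``in particular'' falls out by specializing to $X = A$.

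For the isomorphism $(X \otimes_1 S)^* \cong S^* \otimes_1 X^*$ I would simply chain two natural isomorphisms that are already available. By definition $(X \otimes_1 S)^* = {}_A[X \otimes_1 S, A]$, and the third adjunction isomorphism of \Sref{setup3}, namely ${}_A[X \otimes_1 Z, X'] \cong [Z, {}_A[X,X']]$, applied with $Z = S$ and $X' = A$, gives ${}_A[X \otimes_1 S, A] \cong [S, {}_A[X,A]] = [S, X^*]$. Since $S$ is dualizable in $\C_0$, there is a natural isomorphism $[S,-] \cong S^* \otimes_1 -$ (the classical fact for dualizable objects, which is the $A = 1$ instance of \Lref{fdl}~\eqref{eight}); applying it to $X^*$ yields $[S, X^*] \cong S^* \otimes_1 X^*$. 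All the maps involved are natural in the $\C_A$-variable $X^*$, so the right $A$-module structures match and the composite is an isomorphism in $\C_A$.

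For the second claim I would verify condition \eqref{five} of \Lref{fdl} for $X \otimes_1 S$, exhibiting $Y = S^* \otimes_1 X^* \in \C_A$ as a dual. For arbitrary $X' \in {}_A\C$ and $Z \in \C_0$ I would compute
\begin{align*}
{}_A\C\big((X \otimes_1 S) \otimes_1 Z, X'\big)
&\cong {}_A\C\big(X \otimes_1 (S \otimes_1 Z), X'\big) \\
&\cong \C_0\big(S \otimes_1 Z, X^* \otimes_A X'\big) \\
&\cong \C_0\big(Z, [S, X^* \otimes_A X']\big) \\
&\cong \C_0\big(Z, S^* \otimes_1 (X^* \otimes_A X')\big) \\
&\cong \C_0\big(Z, (S^* \otimes_1 X^*) \otimes_A X'\big),
\end{align*}
where the steps use, in order, associativity of $\otimes_1$, dualizability of $X$ through \Lref{fdl}~\eqref{four}, the tensor--hom adjunction $\C_0(S\otimes_1 Z, V) \cong \C_0(Z, [S,V])$, dualizability of $S$ in $\C_0$, and a coherence isomorphism moving the external factor $S^*$ past $\otimes_A$. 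This is precisely \eqref{five} with $Y = S^* \otimes_1 X^*$, so $X \otimes_1 S$ is dualizable; note this $Y$ agrees with $(X \otimes_1 S)^*$ as computed above.

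The ``in particular'' is then immediate: $A$ is always dualizable and $A^* \cong A$, so taking $X = A$ shows $A \otimes_1 S$ is dualizable and $(A \otimes_1 S)^* \cong S^* \otimes_1 A^* \cong S^* \otimes_1 A$; the latter is dualizable because it is the dual of the dualizable object $A \otimes_1 S$, and duals of dualizable objects are dualizable by \Lref{biduality}. The one step needing genuine care, and the main obstacle, is the final coherence isomorphism $S^* \otimes_1 (X^* \otimes_A X') \cong (S^* \otimes_1 X^*) \otimes_A X'$: since $\otimes_A$ is defined in \Sref{setup3} as a coequalizer of maps built from $\otimes_1$, and $S^* \otimes_1 -$ preserves colimits (being a left adjoint), it carries the defining coequalizer of $X^* \otimes_A X'$ to the defining coequalizer of $(S^* \otimes_1 X^*) \otimes_A X'$; one must also check this identification is $A$-linear, which again follows from naturality.
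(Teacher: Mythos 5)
Your proof is correct and essentially the paper's: your computation of $(X\otimes_1 S)^*$ is the identical chain (the adjunction ${}_A[X\otimes_1 S,A]\cong[S,{}_A[X,A]]$ followed by $[S,-]\cong S^*\otimes_1-$ for dualizable $S$), and your verification of dualizability of $X\otimes_1 S$ via \Lref{finitedeflem}~\eqref{five} is just the general-$Z$ form of the paper's two-line verification of \Lref{finitedeflem}~\eqref{three} (which takes $Z=1$), built from the same isomorphisms. The only genuine additions are that your explicit coequalizer argument for $S^*\otimes_1(X^*\otimes_A X')\cong(S^*\otimes_1 X^*)\otimes_A X'$ spells out a step the paper uses tacitly when writing $S^*\otimes_1 X^*\otimes_A-$ without parentheses, and that condition \eqref{five} — which, unlike \eqref{three}, does not require the isomorphism to be induced by $\varepsilon$ — is a sound and marginally more careful way to close the argument.
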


\begin{proof}
  If $S \in \C_0$ is dualizable, then we have 
  $$(X\otimes_1 S)^*={}_A[X\otimes_1 S,A]\cong [S,{}_A[X,A]]\cong[S,1\otimes_1 X^*]\cong[S,1]\otimes_1 X^* = S^*\otimes_1 X^*.$$
When $X$ is also dualizable we have 
$$
\C_0(1,(X\otimes_1 S)^*\otimes_A-)\cong\C_0(1,S^*\otimes_1 X^* \otimes_A -)\cong\C_0(S,X^*\otimes_A -)
\cong{}_A\C(X\otimes_1 S,-)
$$
on ${}_A\C$, and hence $X\otimes_1 S$ is dualizable in ${}_A\C$ by \Lref{finitedeflem}~\eqref{three}.
\end{proof}

We now have a large supply of categories satisfying \Sref{setup1} and \ref{setup2}

\begin{thm}\label{concretemodelexample} 
  Let $A$ be a monoid in a closed symmetric monoidal Grothendieck category $(\C_0,\otimes_1,[-,-],1)$ where $1$ is finitely presented.
  Assume that $\C_0$ is generated by a set $\S$ of dualizable objects such that $\S^*$ also generates $\C_0$ (e.g.~if $\S=\S^*$). Assume furthermore that ${}_A\S$ is a collection of dualizable objects in ${}_A\C$ which is closed under extensions and contains \mbox{$A\otimes_1\S$} (e.g.~${}_A\S$ could be the collection of all dualizable objects in ${}_A\C$; see Lemmas \ref{finiteext} and \ref{finiteclosedundertensor}). 
  
  \begin{enumerate}

  \item Under the assumptions above, the data $\C_L\mspace{-2mu}:=\mspace{-2mu}{}_A\C$, $\C_R\mspace{-2mu}:=\mspace{-2mu}\C_A$, $\otimes\mspace{-2mu}:=\mspace{-2mu}\otimes_A$, $(-)^*\mspace{-2mu}:=\mspace{-2mu}{}_A[-,A]$, $\S_L\mspace{-2mu}:=\mspace{-2mu}{}_A\S$ and \mbox{$\S_R\mspace{-2mu}:=\mspace{-2mu}({}_A\S)^*$} satisfy \Sref{setup1}. 
  
  In particular, \Tref{stronglim} yields that every semi-flat object in ${}_A\C$, respectively, in $\C_A$, belongs to $\dlim {}_A\S$, respectively, to $\dlim \S_A$.
  
  \item If, in addition, $1$ is $\FP_2$, then \Sref{setup2} holds as well.
  
  In particular, \Tref{stronglim} yields that the class of semi-flat objects in ${}_A\C$, respectively, in $\C_A$, is precisely $\dlim {}_A\S$, respectively, $\dlim \S_A$.
  
  \item If, in addition, $1$ is projective, then $\dlim {}_A\S$, respectively, $\dlim \S_A$, is precisely the (tensor-)flat objects in ${}_A\C$, respectively, in $\C_A$.
  \end{enumerate}
\end{thm}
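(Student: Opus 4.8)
The plan is to check, in turn, that the displayed data satisfy \Sref{setup1} (which gives part~(1)), that the added hypotheses force \Sref{setup2} (part~(2)), and then to read off part~(3) from \Coref{stronglimcor}; in each case the promised conclusions about $\dlim{}_A\S$ follow by feeding the verified setup into \Tref{stronglim}.

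For part~(1) I would run through the axioms of \Sref{setup1} one by one. That $\C_0$, ${}_A\C$ and $\C_A$ are Grothendieck is \Pref{acisab}, and $1$ is finitely presented by hypothesis. The bifunctor $\otimes_A$ is right continuous because it is built (\Sref{setup3}) as a coequalizer of iterates of $\otimes_1$, and $\otimes_1$ preserves colimits in each variable, being part of a closed structure. By \Lref{biduality} the functor $(-)^*={}_A[-,A]$ restricts to a duality carrying $\S_L={}_A\S$ onto $\S_R=({}_A\S)^*$, and the two required Hom-isomorphisms are precisely \Lref{fdl}~\eqref{three} applied to the dualizable object $S\in{}_A\C$ and its right-handed analogue applied to $S^*\in\C_A$ (using $S^{**}\cong S$); all of these are natural. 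The remaining points are generation and closure under extensions of $\S_L$ and $\S_R$. Generation of $\S_L$ follows from \Pref{acisab} since $A\otimes_1\S\subset{}_A\S$ and $\S$ generates $\C_0$; generation of $\S_R$ follows from the right-handed version of \Pref{acisab}, because $(A\otimes_1\S)^*\cong\S^*\otimes_1A\subset\S_R$ by \Lref{finiteclosedundertensor} and $\S^*$ generates $\C_0$. Closure of $\S_L$ under extensions is assumed, and closure of $\S_R$ I would obtain by noting that $(-)^*$ is an exact duality between the extension-closed (hence exact) categories of dualizable objects, as in the snake-lemma argument of \Lref{finiteext}, so it transports extension-closure from ${}_A\S$ to $({}_A\S)^*$; when ${}_A\S$ is all dualizables this is simply \Lref{finiteext}.

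For part~(2) I must, beyond the assumption that $1$ is $\FP_2$, prove that $-\otimes_A S$ is exact and establish the two Ext-isomorphisms. Exactness is pure dualizability: \Lref{fdl} furnishes natural isomorphisms $-\otimes_A S\cong[S^*,-]_A$ and $S^*\otimes_A-\cong{}_A[S,-]$, and in each the right-hand side is a right adjoint (hence left exact) while the left-hand side is a left adjoint (hence right exact), so both functors are exact. For the first Ext-isomorphism I would apply \Lref{HJ} to the adjunction $F=S\otimes_1-\colon\C_0\to{}_A\C$ with right adjoint $G={}_A[S,-]\cong S^*\otimes_A-$ and the object $1\in\C_0$: since $F(1)=S\otimes_11\cong S$ and $G$ is exact, \Lref{HJ} delivers $\Ext_{{}_A\C}(S,-)\cong\Ext_{\C_0}(1,S^*\otimes_A-)$, and the second isomorphism comes out symmetrically from the dualizable object $S^*\in\C_A$.

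The one hypothesis of \Lref{HJ} that is not automatic — and the step I expect to be the real obstacle — is that $F=S\otimes_1-$ must leave every short exact sequence $0\to Z'\to Z\to 1\to 0$ exact. This needs care because $S\otimes_1-$ is genuinely not exact in general (already for $A=\Z/4$ over abelian groups). The saving point is that one only tests sequences ending in the unit: since $-\otimes_11\cong\mathrm{Id}$, the object $1$ is flat, and because $\C_0$ is generated by dualizable (hence flat) objects it has enough flat objects, so $\mathrm{Tor}$ is defined and, $1$ being flat, $\mathrm{Tor}^{\C_0}_1(-,1)=0$; hence every short exact sequence ending in $1$ stays exact after tensoring with any object. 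Applying the exact, faithful forgetful functor ${}_A\C\to\C_0$ reduces the claim for $S\otimes_1-$ to this statement for the underlying object, which supplies the missing hypothesis and, via \Tref{stronglim}, finishes part~(2). Finally, for part~(3): when $1$ is projective, semi-flat agrees with flat (as in \Xref{flatex}), and every $S\in{}_A\S$ is flat because $-\otimes_A S$ is exact, so \Coref{stronglimcor} identifies $\dlim{}_A\S$ with the flat objects of ${}_A\C$, and symmetrically for $\C_A$.
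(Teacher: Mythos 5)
Your proposal follows essentially the same route as the paper's proof: the same decomposition into verifying \Sref{setup1} (via \Pref{acisab}, \Lref{biduality}, \Lref{fdl}~\eref{three}, and \Lref{finiteclosedundertensor}, including the same treatment of generation of $\S_R$ by $\S^*\otimes_1 A$ and of extension-closure transported through the duality) and \Sref{setup2} (via \Lref{HJ} applied to the adjunction from \Lref{fdl}~\eref{four}), with part~(3) read off from \Coref{stronglimcor}. Your identification of the real obstacle --- that $S\otimes_1-$ need only be shown exact on short exact sequences ending in $1$ --- is exactly the paper's key point, and your Tor-vanishing argument is the paper's computation in disguise: the paper takes a $\otimes_1$-flat resolution $F_{\scriptscriptstyle\bullet}$ of the relevant object (enough flats exist since $\C_0$ has coproducts and is generated by dualizable, hence $\otimes_1$-flat, objects --- your justification too), tensors \mbox{$0\to D\to E\to 1\to 0$} with $F_{\scriptscriptstyle\bullet}$, and extracts exactness in degree $0$ from the homology long exact sequence using $H_1(F_{\scriptscriptstyle\bullet})=0$; this is precisely what underlies your claim $\mathrm{Tor}_1^{\C_0}(-,1)=0$, and done this way no well-definedness or balancedness of $\mathrm{Tor}$ is even needed. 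The one step you omit is smallness: \Sref{setup1} requires $\S_L$ and $\S_R$ to be generating \emph{sets}, whereas the theorem only assumes ${}_A\S$ is a collection, so set-hood must be verified --- and it genuinely matters, since \Pref{perplim} (used inside the proof of \Tref{stronglim}) needs $\S$ skeletally small. The paper handles this by noting that every dualizable object is finitely presented (because $1$ is, using ${}_A\C(S,-)\cong\C_0(1,S^*\otimes_A-)$), so ${}_A\C$ is locally finitely presented, $\fp{{}_A\C}$ is small by \Rref{lfp-eqc}, and hence ${}_A\S\subset\fp{{}_A\C}$ and $({}_A\S)^*$ are small. This is a minor omission, easily patched with the paper's argument, and otherwise your proof is correct.
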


\begin{proof}  
  (1): First note that since $1 \in \C_0$ is finitely presented, so is every dualizable object. Indeed, for e.g.~$S \in {}_A\C$ one has ${}_A\C(S,-) \cong \C_0(1,S^* \otimes_A-)$;~cf.~Remark~\ref{finiteisfp}. \Pref{acisab} shows that ${}_A\C$ is Grothendieck generated by the set~\mbox{$A\otimes_1\S \subseteq {}_A\S$}. The objects in the set $A\otimes_1\S$ are dualizable, cf.~\Lref{finiteclosedundertensor}, and hence finitely presented by the observation above. Consequently, ${}_A\C$ is a locally finitely presented Grothendieck category, and $\fp{{}_A\C}$ is small by Remark~\ref{lfp-eqc}; hence ${}_A\S \subseteq \fp{{}_A\C}$ is small. Similarly, $\C_A$ is  Grothendieck generated by the set $\S^*\otimes_1A=(A\otimes_1\S)^* \subseteq ({}_A\S)^*$; see Lemma~\ref{finiteclosedundertensor}. And as ${}_A\S$ is small, so is $({}_A\S)^*$. By \Lref{biduality} the class $({}_A\S)^*$ consists of dualizable objects and $(-)^*$ yields a duality between ${}_A\S$ and $({}_A\S)^*$. Since ${}_A\S$ is closed under extensions, the same is true for $({}_A\S)^*$ (by the duality). The natural isomorphisms in \Sref{setup1} hold by \Lref{finitedeflem}~\eqref{three}. It remains to note that $\otimes_A$ is a right continuous bifunctor, as it is a left adjoint in both variables.

(2): Assume that $1$ is $\FP_2$. Every $S\in {}_A\S$ is dualizable, so the functor $-\otimes_A S$ is exact. Thus, to establish \Sref{setup2} it remains to prove the two natural isomorphisms herein. We only prove the second of these, i.e.~\mbox{$\Ext_{\C_0}(1,-\otimes_A S)\cong\Ext_{\C_A}(S^*,-)$} for $S \in {}_A\S$. The first one is proved similarly. To this end, we apply Lemma~\ref{HJ} to the adjunction \mbox{$S^*\mspace{-2mu}\otimes_1- \colon \C_0 \leftrightarrows \C_A \,\colon\! -\otimes_A \mspace{2mu}S$} from \Lref{finitedeflem}~\eqref{four}. The right adjoint $-\otimes_AS$ is clearly exact as $S$ is dualizable in ${}_A\C$. It remains to show that the left adjoint functor $S^*\otimes_1-$ leaves every short exact sequence $0 \to D \to E \to 1 \to 0$ (ending in $1$) exact. To see this, first note that the category $\C_0$ has \emph{enough $\otimes_1$-flats}, that is, for every object $X \in \C_0$ there exists an epimorphism $F \twoheadrightarrow X$ in $\C_0$ where $F$ is $\otimes_1$-flat.
Indeed, this follows from Stenstr{\"o}m~\cite[IV.6 Prop.~6.2]{Stenstrom} as $\C_0$ has coproducts and is generated by a set of $\otimes_1$-flat (even dualizable) objects. Consequently, $S^*$ has a $\otimes_1$-flat resolution $F_{\scriptscriptstyle\bullet} = \cdots \to F_1 \to F_0 \to 0$ in $\C_0$. Every short exact sequence $0 \to D \to E \to 1 \to 0$ in $\C_0$ induces a short exact sequence \mbox{$0 \to F_{\scriptscriptstyle\bullet} \mspace{-1mu}\otimes_1\mspace{-1mu} D \to F_{\scriptscriptstyle\bullet} \mspace{-1mu}\otimes_1\mspace{-1mu} E \to F_{\scriptscriptstyle\bullet}\to 0$} of chain complexes in $\C_0$ which, in turn, yields a long exact sequence in homology,
\begin{equation*}
  \cdots 
  \to H_1(F_{\scriptscriptstyle\bullet})    
  \to H_0(F_{\scriptscriptstyle\bullet}\mspace{-1mu}\otimes_1\mspace{-1mu}D)
  \to H_0(F_{\scriptscriptstyle\bullet}\mspace{-1mu}\otimes_1\mspace{-1mu}E)  
  \to H_0(F_{\scriptscriptstyle\bullet})  
  \to H_{-1}(F_{\scriptscriptstyle\bullet}\mspace{-1mu}\otimes_1\mspace{-1mu}D)  
  \to \cdots.
\end{equation*}
Evidently, $H_1(F_{\scriptscriptstyle\bullet})=0=H_{-1}(F_{\scriptscriptstyle\bullet}\mspace{-1mu}\otimes_1\mspace{-1mu}D)$. As the functor $-\otimes_1X$ is right exact we get $H_0(F_{\scriptscriptstyle\bullet}\otimes_1X) \cong S^*\otimes_1X$ for all $X \in \C_0$, and so \mbox{$0 \to S^* \mspace{-1mu}\otimes_1\mspace{-1mu} D \to S^* \mspace{-1mu}\otimes_1\mspace{-1mu} E \to S^* \to 0$} is exact, as desired.

(3) Immediate from Corollary~\ref{stronglimcor}.
\end{proof}

For closed symmetric monoidal Grothendieck categories we get the following.

\begin{cor}\label{cor-concretemodelexample} 
  Let $(\C_0,\otimes_1,[-,-],1)$ be a closed symmetric monoidal Grothendieck category where $1$ is finitely presented. Assume that $\C_0$ is generated by the set $\S$ of dualizable objects. Then the following hold:
  
  \begin{enumerate}

  \item Every semi-flat object in $\C$ belongs to $\dlim \S$.
  
  \item If $1$ is $\FP_2$, then the class of semi-flat objects in $\C$ is precisely $\dlim \S$.
  
  \item If $1$ is projective, then $\dlim \S$ is precisely the (tensor-)flat objects in $\C$.
  \end{enumerate}
\end{cor}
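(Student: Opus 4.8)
The plan is to recognize Corollary~\ref{cor-concretemodelexample} as the special case $A = 1$ of Theorem~\ref{concretemodelexample}. First I would set $A = 1$, the unit monoid of $\C_0$. By \Sref{setup3} we have ${}_1\C \cong \C_0 \cong \C_1$, so the left and right module categories both collapse to $\C_0$ itself; moreover $\otimes_A = \otimes_1$ and the dual $(-)^* = {}_1[-,1] = [-,1]$ becomes the usual internal dual in $\C_0$. Under this identification, \Dref{finitedef} of dualizability in ${}_1\C$ reduces verbatim to the standard notion of a dualizable object in the closed symmetric monoidal category $\C_0$, so the two classes of dualizable objects coincide, and the intrinsic notions of semi-flat and tensor-flat in $\C = \C_0$ agree with those coming from the module-category setup.

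Next I would take both $\S$ and ${}_A\S$ in \Tref{concretemodelexample} to be the (skeletally small) set of all dualizable objects in $\C_0$, which by hypothesis generates $\C_0$ and, since $1$ is finitely presented, consists of finitely presented objects. I would then check the remaining hypotheses of that theorem. First, $\S^*$ generates $\C_0$: indeed $(-)^*$ restricts to a self-duality on the dualizable objects with $X^{**} \cong X$ by \Lref{biduality}, so $\S^* = \S$ up to isomorphism and hence $\S^*$ generates. Second, ${}_A\S$ is closed under extensions: this is exactly \Lref{finiteext}. Third, ${}_A\S$ contains $A \otimes_1 \S = 1 \otimes_1 \S \cong \S$, which is trivial since $1 \otimes_1 X \cong X$. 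These are all the hypotheses of \Tref{concretemodelexample}.

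Finally, I would invoke parts (1), (2), and (3) of \Tref{concretemodelexample} directly. Part (1) gives that every semi-flat object of $\C = \C_0$ lies in $\dlim \S$; part (2), under the additional assumption that $1$ is $\FP_2$, upgrades this to the equality of $\dlim \S$ with the class of semi-flat objects; and part (3), under the additional assumption that $1$ is projective, identifies $\dlim \S$ with the tensor-flat objects. The main, and essentially only, point requiring care is the bookkeeping in the first paragraph: confirming that setting $A = 1$ genuinely identifies the abstract module-theoretic data of \Tref{concretemodelexample} with the intrinsic monoidal data of $\C_0$, so that the hypotheses reduce to the stated ones and no verification beyond \Lref{biduality} and \Lref{finiteext} is needed.
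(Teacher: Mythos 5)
Your proposal is correct and matches the paper's intended argument: the paper states Corollary~\ref{cor-concretemodelexample} without a separate proof precisely because it is the specialization $A=1$ of Theorem~\ref{concretemodelexample} (cf.\ the identification ${}_1\C\cong\C_0\cong\C_1$ in Setup~\ref{setup3} and Example~\ref{examples-of-Setup1}(5)), with the hypotheses verified exactly as you do via Lemmas~\ref{biduality} and~\ref{finiteext}. Your bookkeeping in the first paragraph and the observation that $\S^*=\S$ up to isomorphism supply precisely the details the paper leaves implicit.
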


\begin{remark}\label{setup2rem} 
Consider the situation from \Tref{concretemodelexample}. If $1 \in \C_0$ is projective, then all objects in ${}_A\S$ are projective. Indeed, consider any $S \in {}_A\S$. By \Lref{finitedeflem}~\eqref{four} we have the adjunction \mbox{$S\otimes_1- \colon \C_0 \leftrightarrows {}_A\C \,\colon\! S^*\otimes_A-$}, and since the right adjoint functor $S^*\otimes_A-$ is exact, the left adjoint functor $S\otimes_1-$ preserves projective objects. Hence, if $1 \in \C_0$ is projective, then so is $S\otimes_1\mspace{-2mu}1 \cong S \in {}_A\S$. 
    
    Thus, in the case where $1 \in \C_0$ is projective, the cotorsion pair $(\P,\E)$ in ${}_A\C$ generated by ${}_A\S$ is the trivial cotorsion where $\P$ is the class of all projective objects and $\E={}_A\C$ (cf.~Definition~\ref{projacycdef}). Similarly for $\S_A$ and $\C_A$. 
\end{remark}

\section{Examples}
\label{Examples}

In this final section, we return to the examples from Example~\ref{examples-of-Setup1} and to the results from the literature mentioned in the Introduction.

\subsection{\texorpdfstring{$A$}{A}-Mod}
$\C_0=\Ab$ is a Grothendieck category generated by $1=\Z$, which is finitely presented and projective, and ${}_A\C$ is just $A$-Mod.
The condition in \Dref{finitedef} is equivalent to the existence of
a finite number of elements $f_i\in X^*$ and $x_i\in X$ such that $x=\sum_if_i(x)x_i$ for any $x\in X.$
By the Dual Basis Theorem \cite[Chap.~2.3]{Northcott}, this is precisely the finitely generated projective $R$-modules.
Also the finitely generated free modules are closed under extensions and contains $R\otimes_{\Z}\Z \cong R$,
so by \Tref{concretemodelexample}(3) we get
the original theorem of Lazard and Govorov:

\begin{cor}
  Over any ring, the flat modules are the direct limit closure of the finitely generated projective (or free) modules. \qed
\end{cor}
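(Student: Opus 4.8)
The plan is to obtain the corollary as a direct application of \Tref{concretemodelexample}(3) to the monoidal data $\C_0 = \Ab$ with unit $1 = \Z$ and with the monoid $A$ taken to be the given ring. First I would record that $\Ab$ is a closed symmetric monoidal Grothendieck category in which the unit $\Z$ is finitely presented, projective, and dualizable, with $\Z^* \cong \Z$. Taking $\S = \{\Z\}$ then furnishes a generating set of dualizable objects with $\S^* = \S$, so the hypotheses of \Tref{concretemodelexample} concerning $\C_0$ are met, and the additional projectivity hypothesis needed for part (3) holds because $\Z$ is projective. Under the identification ${}_A\C \cong A\text{-Mod}$, this reduces the corollary to pinning down the class ${}_A\S$ and verifying that it satisfies the remaining axioms of the theorem.

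The second step is to identify ${}_A\S$. Here I would unwind \Dref{finitedef} in the concrete setting: a map $\eta' \colon \Z \to X^* \otimes_A X$ is an element $\sum_i f_i \otimes x_i$, and the required commutativity translates into the identity $x = \sum_i f_i(x)x_i$ for all $x \in X$. By the Dual Basis Theorem this characterizes exactly the finitely generated projective $A$-modules. Thus I may take ${}_A\S$ to be either the finitely generated projective modules (all dualizable objects) or, more restrictively, the finitely generated free modules. Both are families of dualizable objects; both contain $A \otimes_\Z \Z \cong A = A \otimes_1 \S$; and both are closed under extensions, since any $0 \to X_1 \to X_2 \to X_3 \to 0$ with $X_3$ projective splits, giving $X_2 \cong X_1 \oplus X_3$ in the same class. (The abstract closure statements \Lref{finiteext} and \Lref{finiteclosedundertensor} would provide these facts as well.)

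With both verifications in hand, \Tref{concretemodelexample}(3) applies and yields that $\dlim {}_A\S$ is precisely the class of (tensor-)flat objects in $A\text{-Mod}$; since $\Z$ is projective, tensor-flatness coincides with ordinary module flatness (cf.\ \Xref{flatex}). Because each of the two admissible choices of ${}_A\S$ satisfies the hypotheses, the theorem returns the same class for both, so $\dlim$ of the finitely generated projective modules and $\dlim$ of the finitely generated free modules both equal the class of flat modules, which is the ``projective (or free)'' assertion. I expect the only genuinely nonroutine point to be the translation of \Dref{finitedef} into the dual-basis condition and its identification with finite generation plus projectivity; once that dictionary is set up, every other hypothesis of \Tref{concretemodelexample} is either immediate or a one-line splitting argument.
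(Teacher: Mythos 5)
Your proposal is correct and follows essentially the same route as the paper: the paper likewise specializes \Tref{concretemodelexample}(3) to $\C_0=\Ab$ with $1=\Z$ finitely presented and projective, identifies the dualizable objects of $A\text{-Mod}$ with the finitely generated projective modules by unwinding \Dref{finitedef} into the dual-basis condition $x=\sum_i f_i(x)x_i$, and notes that the finitely generated free modules are closed under extensions and contain $A\otimes_{\Z}\Z\cong A$. Your write-up merely makes explicit a few details the paper leaves implicit (the splitting argument for extension-closure and the identification of tensor-flatness with ordinary flatness when $1$ is projective), so there is nothing to correct.
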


\subsection{\texorpdfstring{$A$}{A}-GrMod}
$\C_0=\mathbb{Z}\text{-GrMod}$ is a Grothendieck category where $1=\Z$ is finitely presented and projective. The category $\C_0$ is generated by the set $\S=\{\Sigma^i1\}_{i \in \Z}$, which is self-dual (that is, $\S^*=\S$) and consists of dualizable objects. Also note that ${}_A\C$ is just $A\textnormal{-GrMod}$. A graded $A$-module is \emph{finitely generated free} if it is a finite direct sums of shifts of $A$, and it is \emph{finitely generated projective} if it is a direct summand of a finitely generated free graded $A$-module. Arguments like the ones above show that the dualizable objects in ${}_A\C$ are precisely the finitely generated projective graded $A$-modules. Thus by \Tref{concretemodelexample}(3) we get the following version of Govorov-Lazard for graded modules (which does not seem to be available in the literature):

\begin{cor}
  \label{graded-modules}
  Over any $\mathbb{Z}$-graded ring, the flat graded modules are the direct limit closure of the finitely generated projective (or free) graded modules. \qed
\end{cor}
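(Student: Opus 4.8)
The plan is to realize $\mathbb{Z}\text{-GrMod}$ as the base category $\C_0$ of \Sref{setup3} and then invoke \Tref{concretemodelexample}(3). First I would equip $\C_0=\mathbb{Z}\text{-GrMod}$ with its standard closed symmetric monoidal structure: $\otimes_1$ is the graded tensor product, the internal hom $[M,N]$ carries in degree $i$ the group of degree-$i$ homomorphisms $M\to N$, and the unit $1=\Z$ sits in degree $0$. This $\C_0$ is Grothendieck, and $1=\Z$ is both finitely presented and projective, being free of rank one concentrated in a single degree. The given $\mathbb{Z}$-graded ring $A$ is precisely a monoid in $\C_0$, and ${}_A\C$ is $A\text{-GrMod}$.

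Next I would check the generating hypotheses of \Tref{concretemodelexample}. The shifts $\S=\{\Sigma^i1\}_{i\in\Z}$ generate $\C_0$; each $\Sigma^i1$ is tensor-invertible, hence dualizable, with $(\Sigma^i1)^*\cong\Sigma^{-i}1$, so $\S$ is self-dual ($\S^*=\S$). By \Lref{finiteclosedundertensor} the objects $A\otimes_1\Sigma^i1\cong\Sigma^iA$ are dualizable in ${}_A\C$, and by \Lref{finiteext} the dualizable objects are closed under extensions and direct summands. Taking ${}_A\S$ to be the collection of \emph{all} dualizable objects in ${}_A\C$ then automatically satisfies the closure-under-extensions requirement and contains $A\otimes_1\S$.

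The step that requires the most work is identifying ${}_A\S$ explicitly as the finitely generated projective graded $A$-modules. The inclusion of finitely generated projectives into ${}_A\S$ follows from the two lemmas just cited: finite direct sums of the shifts $\Sigma^iA$ are dualizable, and dualizability passes to direct summands. For the reverse inclusion I would unwind \Dref{finitedef} exactly as in the ungraded $A\text{-Mod}$ case, where the map $\eta'\colon1\to X^*\otimes_AX$ amounts to a finite collection of homogeneous elements $x_i\in X$ and homogeneous functionals $f_i\in X^*$ satisfying $x=\sum_if_i(x)x_i$ for all $x\in X$; this is the graded Dual Basis condition and forces $X$ to be a finitely generated projective graded module. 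This hands-on translation is the main obstacle, since it is where one must pin down precisely which objects the abstract dualizability condition singles out; everything else is formal.

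Finally, since $1=\Z$ is projective, \Tref{concretemodelexample}(3) applies and yields that $\dlim{}_A\S$ is exactly the class of flat graded $A$-modules, proving the statement for finitely generated projectives. For the ``free'' version I would note that every finitely generated projective graded module is a direct summand of a finite direct sum of shifts of $A$; since the direct limit closure of the finitely generated free graded modules is closed under direct summands by \Lref{limclosed}, the two direct limit closures coincide, which completes the proof.
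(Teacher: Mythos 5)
Your proposal is correct and follows essentially the same route as the paper: realize $A\text{-GrMod}$ as ${}_A\C$ for the monoid $A$ in $\C_0=\mathbb{Z}\text{-GrMod}$ with $1=\Z$ finitely presented and projective, identify the dualizable objects as the finitely generated projective graded modules via the graded Dual Basis argument, and invoke Theorem~\ref{concretemodelexample}(3). The paper compresses the identification step to ``arguments like the ones above'' (referring to its $A$-Mod discussion) and your unwinding of Definition~\ref{finitedef} with homogeneous elements and functionals, together with the summand-closure argument via Lemma~\ref{limclosed} for the ``free'' variant, is exactly the intended filling-in of those details.
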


\subsection{\texorpdfstring{$A$}{A}-DGMod}
\label{subsectionDGMOD}
$\C_0=\Ch(\Ab)$, the category of chain complexes of abelian groups, is a Grothendieck category where $1$ is the complex with $\Z$ concentrated in degree~$0$. Note that $1$ is finitely presented (but not projective!), as $\C_0(1,-)\cong Z_0(-)$ is the $0^\mathrm{th}$ cycle functor which preserves direct limits. The category $\C_0$ is generated by the set $\S=\{\Sigma^iM(\mathrm{Id}_1)\}_{i \in \Z}$ (where $M(\mathrm{Id}_1)$ is the mapping cone of the identity morphism on $1$), which is self-dual (i.e.~$\S^*=\S$) and consists of dualizable objects.

A monoid $A$ in $\C_0=\Ch(\Ab)$ is a differential graded algebra and
${}_A\C$ is the category $A\text{-DGMod}$ of differential graded left $A$-modules.
DG-modules are thus covered by \Sref{setup3}.
Clearly any shift of $A$ is dualizable, so by \Lref{finiteext} any finite extension of shifts of $A$ will be dualizable, and we call such modules
\emph{finitely generated semi-free}. Direct summands of those are called \emph{finitely generated semi-projectives}. I do not know if the finitely generated semi-projective DG-modules constitute all dualizable objects in ${}_A\C = A\text{-DGMod}$ for a general DGA. Nevertheless, it is not hard to check that both of the above mentioned classes are self-dual and closed under extensions.
They also contain $A\otimes_{\Z}\S$ and thus satisfy \Sref{setup1} by \Tref{concretemodelexample}(1). 

Actually, $1=\Z$ is not just finitely presented but even $\FP_2$, so from \Tref{concretemodelexample}(2) we conclude that the direct limit closure of the finitely generated semi-free/semi-projective DG-modules is precisely the class of semi-flat objects in ${}_A\C = A\text{-DGMod}$ in the abstract sense of \Dref{projacycdef}. Before we go further into this, let's see that our abstract notions of semi-projective, acyclic and semi-flat objects from \Dref{projacycdef} agree with the usual ones. These notions originate in the treatise \cite{avramov} by Avramov, Foxby, and Halperin, where several equivalent conditions are given.

\begin{defn} 
\label{DG-notions}
Let $A$ be any DGA and let ${}_A\C=A\text{-DGMod}$.
  \begin{itemize}
  \item A DG-module is called \emph{acyclic} (or \emph{exact}) if it has trivial homology.
  \item A DG-module, $P$, is called \emph{semi-projective} (or \emph{DG-projective}) 
        if ${}_A\C(P,\psi)$ is epi, whenever $\psi$ is epi and $\ker\psi$ has trivial homology (in other words, $\psi$ is a surjective quasi-isomorphism).
  \item A DG-module, $M,$ is called \emph{semi-flat} (or \emph{DG-flat}) if $-\otimes_A M$ is exact and preserves acyclicity
        (i.e. $E\otimes_A M$ has trivial homology whenever $E$ has).    
  \end{itemize}
\end{defn}

First we notice that:

\begin{lemma}\label{dgproj}
 A DG-module $P$ is DG-projective iff $\Ext^1_{{}_A\C}(P,E)=0$ whenever $E$ is a DG-module with trivial homology.
\end{lemma}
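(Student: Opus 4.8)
The plan is to exploit the standard correspondence between $\Ext^1$ and short exact sequences, so no genuinely new idea is needed beyond the homological formalism already available in the Grothendieck category ${}_A\C = A\text{-DGMod}$. First I would record the observation, implicit in \Dref{DG-notions}, that a surjection $\psi\colon X\to Y$ has $\ker\psi$ acyclic if and only if $\psi$ is a quasi-isomorphism: applying the homology long exact sequence to $0\to\ker\psi\to X\xrightarrow{\psi} Y\to 0$ shows that the homology of $\ker\psi$ vanishes exactly when $\psi$ induces isomorphisms on all homology groups. I would also note that, since ${}_A\C$ is a Grothendieck category, $\Ext^1_{{}_A\C}(P,-)$ enjoys the usual long exact sequence and that $\Ext^1_{{}_A\C}(P,E)$ classifies Yoneda extensions of $P$ by $E$, vanishing precisely when every such extension splits.

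For the ``if'' direction, assume $\Ext^1_{{}_A\C}(P,E)=0$ for every DG-module $E$ with trivial homology, and let $\psi\colon X\to Y$ be any surjective quasi-isomorphism, so that $E:=\ker\psi$ is acyclic. Applying the functor ${}_A\C(P,-)$ to the short exact sequence $0\to E\to X\xrightarrow{\psi} Y\to 0$ gives the long exact sequence
$$
{}_A\C(P,X)\xrightarrow{{}_A\C(P,\psi)} {}_A\C(P,Y)\to \Ext^1_{{}_A\C}(P,E)\to\cdots,
$$
and since $\Ext^1_{{}_A\C}(P,E)=0$ the map ${}_A\C(P,\psi)$ is epi. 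Thus $P$ is DG-projective in the sense of \Dref{DG-notions}.

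For the ``only if'' direction, assume $P$ is DG-projective and let $E$ be any acyclic DG-module. An arbitrary class in $\Ext^1_{{}_A\C}(P,E)$ is represented by an extension $0\to E\to X\xrightarrow{\psi} P\to 0$, where $\psi$ is an epimorphism whose kernel $E$ is acyclic; by the first observation, $\psi$ is a surjective quasi-isomorphism. DG-projectivity of $P$ then forces ${}_A\C(P,\psi)$ to be epi, so the identity $\Id_P\in{}_A\C(P,P)$ lifts to a section $s\in{}_A\C(P,X)$ with $\psi s=\Id_P$. Hence the extension splits, and as every representing extension splits we conclude $\Ext^1_{{}_A\C}(P,E)=0$.

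The argument is essentially routine, so there is no serious obstacle; the only points requiring a little care are the identification of ``epimorphism with acyclic kernel'' with ``surjective quasi-isomorphism'' and the justification that the Yoneda description of $\Ext^1$ together with its long exact sequence is legitimate in the Grothendieck category ${}_A\C$. Both are standard, and neither depends on $A$ being anything more than a general DGA.
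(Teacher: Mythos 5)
Your proof is correct and follows essentially the same route as the paper: the ``if'' direction via the long exact sequence of $\Ext^1_{{}_A\C}(P,-)$ applied to a surjection with acyclic kernel, and the ``only if'' direction by noting that any extension $0\to E\to X\xrightarrow{\psi} P\to 0$ with $E$ acyclic is a surjective quasi-isomorphism, so DG-projectivity lifts $\Id_P$ and splits it. Your version merely spells out details the paper leaves implicit (the homology long exact sequence identifying acyclic kernels with quasi-isomorphisms, and the legitimacy of Yoneda $\Ext^1$ in a Grothendieck category), which is harmless.
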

\begin{proof}
If $\Ext^1_{{}_A\C}(P,E)=0$ and $$\xymatrix{
0\ar[r] &E\ar[r]&A\ar[r]^\phi&B\ar[r]&0
}$$
is an exact sequence, then clearly ${}_A\C(P,\phi)$ is epi.
On the other hand, if $$\xymatrix{
0\ar[r] &E\ar[r]&X\ar[r]^\phi&P\ar[r]&0
}$$
is exact and ${}_A\C(P,\phi)$ is epi, then the sequence split, so $\Ext^1_{{}_A\C}(P,E)=0.$
\end{proof}

Next we see that:

\begin{lemma}\label{DGhomology}
Let $A$ be a DGA. For any $N\in{}_A\C$ we have
 $\Ext^1_{{}_A\C}(\Sigma A,N)=H_0(N).$  
\end{lemma}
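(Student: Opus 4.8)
The plan is to realize $\Ext^1_{{}_A\C}(\Sigma A,N)$ as a cokernel by resolving $\Sigma A$ one step by a projective object. The right object is the mapping cone $D:=M(\mathrm{Id}_A)$ of the identity on $A$ (the module-level analogue of the generators used in \ref{subsectionDGMOD}), which fits into the standard short exact sequence of DG-modules
\begin{equation*}
0\longrightarrow A \stackrel{\iota}{\longrightarrow} D \stackrel{\pi}{\longrightarrow} \Sigma A \longrightarrow 0 .
\end{equation*}

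First I would check that $D$ is projective in ${}_A\C$. Concretely $D$ is the free DG-module $Ax\oplus A\,\partial x$ on one generator $x$ in degree $1$, with $\partial x$ in degree $0$; hence a morphism $D\to N$ in ${}_A\C$ is nothing but an unconstrained choice of $\phi(x)\in N_1$ (the value $\phi(\partial x)=\partial\phi(x)$ being then forced). This gives a natural isomorphism ${}_A\C(D,N)\cong N_1$, and since the functor $N\mapsto N_1$ is exact, $D$ is projective; in particular $\Ext^1_{{}_A\C}(D,N)=0$.

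Next I would apply ${}_A\C(-,N)$ to the short exact sequence. Because $\Ext^1_{{}_A\C}(D,N)=0$, the long exact sequence collapses to
\begin{equation*}
\Ext^1_{{}_A\C}(\Sigma A,N)\;\cong\;\operatorname{coker}\bigl({}_A\C(D,N)\stackrel{\iota^*}{\longrightarrow}{}_A\C(A,N)\bigr).
\end{equation*}
Here ${}_A\C(A,N)\cong Z_0(N)$, since an $A$-linear chain map $A\to N$ is determined by the image of $1$, which must be a $0$-cycle. Under the identifications ${}_A\C(D,N)\cong N_1$ and ${}_A\C(A,N)\cong Z_0(N)$, and using $\iota(1)=\partial x$, the map $\iota^*$ sends $m\in N_1$ to $\phi(\partial x)=\partial m$; that is, $\iota^*$ is the differential $N_1\to Z_0(N)$, with image $B_0(N)$. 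Hence the cokernel is $Z_0(N)/B_0(N)=H_0(N)$, as claimed.

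The two $\mathrm{Hom}$-computations and the identification of $\iota^*$ are the only places requiring care: one must track the mapping-cone differential (and the Koszul sign in it) to confirm that $\iota(1)=\partial x$, so that $\iota^*$ is exactly the boundary map. A possible sign there is harmless, since it does not change the image $B_0(N)$; everything else is formal, so I expect this bookkeeping, rather than any conceptual difficulty, to be the main obstacle.
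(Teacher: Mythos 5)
Your proof is correct and follows essentially the same route as the paper: both resolve $\Sigma A$ by the short exact sequence $0\to A\to M(\Id_A)\to\Sigma A\to 0$, use projectivity of the cone to collapse the long exact sequence, and identify the resulting map ${}_A\C(M(\Id_A),N)\to{}_A\C(A,N)$ with $\partial_1^N\colon N_1\to Z_0(N)$, whose cokernel is $H_0(N)$. Your write-up merely makes explicit the ``straightforward calculations'' (the identifications ${}_A\C(M(\Id_A),N)\cong N_1$, ${}_A\C(A,N)\cong Z_0(N)$, and $\iota(1)=\partial x$) that the paper leaves to the reader.
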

\begin{proof}
  To compute this, we use the short exact sequence
$$
\xymatrix{
0\ar[r]& A\ar[r]&M(\Id_A)\ar[r]&\Sigma A\ar[r]&0
}
$$
where $M(\Id_A)$ is the mapping cone of \smash{$\xymatrix{A\ar[r]^{\Id_A}&A}$}.
Since $M(\Id_A)$ is projective we have $\Ext^1_{{}_A\C}(M(\Id_A),N)=0,$ so we get an exact sequence
$$
\xymatrix{
{}_A\C(M(\Id_A),N)\ar[r]&{}_A\C(A,N)\ar[r]&\Ext^1_{{}_A\C}(\Sigma A,N)\ar[r]&0
}
$$
Straightforward calculations show that this sequence is isomorphic to
\begin{equation*}
\xymatrix{
N_{1}\ar[r]^-{\partial_{1}^N}&Z_0(N)\ar[r]&\Ext^1_{{}_A\C}(\Sigma A,N)\ar[r]&0
}
\end{equation*}
where $N_{1}$ is the degree $1$ part of $N$ and $\partial_{1}^N$ is the differential. Thus we get the desired~iso\-morphim $\Ext^1_{{}_A\C}(\Sigma A,N)\cong H_0(N)$.
\end{proof}

Together we have the following.

\begin{thm}\label{thmDG}
Let $A$ be any DGA and let $\S$ be the class of finitely generated semi-free/semi-projective DG $A$-modules (see \ref{subsectionDGMOD}). The abstract notions of semi-projec\-ti\-vi\-ty, acyclicity, and semi-flatness from \Dref{projacycdef} agree with the corresponding DG notions from \Dref{DG-notions}. In the category of DG $A$-modules, the~co\-tor\-sion pair generated by $\S$ is complete and it is given by
$$(\text{DG-projective DG-modules}, \text{exact DG-modules})\,.$$
The direct limit closure of $\S$ is the class of semi-flat (or DG-flat) DG-modules.
\end{thm}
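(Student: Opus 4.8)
The plan is to reduce everything to the identification of the three \emph{abstract} notions of Definition~\ref{projacycdef} with the corresponding \emph{DG} notions of Definition~\ref{DG-notions}; once these identifications are in hand, the asserted description of the cotorsion pair follows from Proposition~\ref{completecotorsion}, and the statement about the direct limit closure is exactly Theorem~\ref{concretemodelexample}(2), which applies since $1=\Z$ is $\FP_2$ (as noted in \ref{subsectionDGMOD}).

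I would first treat \emph{acyclicity}. Abstract acyclicity means membership in $\S^\perp$, and I claim $\S^\perp=\{\Sigma^iA\}_{i\in\Z}^\perp$. The inclusion $\S^\perp\subseteq\{\Sigma^iA\}^\perp$ is clear since each $\Sigma^iA\in\S$. For the reverse inclusion, observe that for a fixed $E$ the class $\{X:\Ext^1_{{}_A\C}(X,E)=0\}$ is closed under extensions and direct summands (by the long exact $\Ext$-sequence and additivity of $\Ext$), while every $S\in\S$ is a finite extension of shifts of $A$ or a summand of such; hence $\Ext^1_{{}_A\C}(\Sigma^iA,E)=0$ for all $i$ forces $\Ext^1_{{}_A\C}(S,E)=0$. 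Shifting Lemma~\ref{DGhomology} then gives $\Ext^1_{{}_A\C}(\Sigma^iA,N)\cong H_{i-1}(N)$, so as $i$ ranges over $\Z$ these groups run through all of $H_*(N)$. Therefore $E\in\S^\perp$ if and only if $H_*(E)=0$, i.e.\ abstract acyclic coincides with DG exact, and $\E$ is precisely the class of exact DG-modules.

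With $\E$ identified, \emph{semi-projectivity} is immediate: by definition $\P={}^\perp(\S^\perp)={}^\perp\E$ consists of those $P$ with $\Ext^1_{{}_A\C}(P,E)=0$ for every exact $E$, which is exactly the characterization of DG-projectivity supplied by Lemma~\ref{dgproj}. Since $\S$ is a generating set (it contains the generators $\Sigma^iM(\Id_A)=A\otimes_1\Sigma^iM(\Id_1)$, cf.\ Theorem~\ref{concretemodelexample}(1)), Proposition~\ref{completecotorsion} shows that $(\P,\E)$ is a complete cotorsion pair, now identified with the pair (DG-projective DG-modules, exact DG-modules).

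The subtlest point, and what I expect to be the main obstacle, is matching \emph{semi-flatness}. Both notions require $-\otimes_A M$ to be exact, so the real content is to match ``preserves acyclicity.'' In the abstract sense this asks that $E\otimes_A M\in 1^\perp$ for every $E\in\E$, where $1^\perp\subseteq\Ch(\Ab)$. Computing as in Lemma~\ref{DGhomology} over the trivial DGA $\Z$, one finds that $\Ext^1_{\Ch(\Ab)}(1,C)$ is a single fixed homology group of $C$, so $1^\perp$ only records the vanishing of \emph{one} homology group, whereas DG-flatness demands that $E\otimes_A M$ have \emph{trivial} homology. The resolution is that the class $\E$ of exact DG-modules is closed under all shifts: replacing $E$ by $\Sigma^jE$ and using $H_k(\Sigma^jE\otimes_A M)\cong H_{k-j}(E\otimes_A M)$, the single-degree condition, once quantified over all $E\in\E$, becomes equivalent to the vanishing of every homology group of $E\otimes_A M$. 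Hence abstract semi-flat coincides with DG-flat, and combining this with Theorem~\ref{concretemodelexample}(2) yields that $\dlim\S$ is exactly the class of DG-flat modules, as claimed.
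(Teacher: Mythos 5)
Your proposal is correct and takes essentially the same route as the paper's own proof: you identify $\S^\perp$ with the exact DG-modules via the shifted form of Lemma~\ref{DGhomology}, get $\P$ from Lemma~\ref{dgproj} and completeness from Proposition~\ref{completecotorsion}, match the two notions of semi-flatness by computing $\Ext^1_{\Ch(\Ab)}(\Z,C)\cong H_{-1}(C)$ and quantifying over all shifts of an exact $E$, and invoke Theorem~\ref{concretemodelexample}(2) for the direct limit closure. The only cosmetic difference is that you prove $\E\subseteq\S^\perp$ directly from closure of $\{X\mid\Ext^1_{{}_A\C}(X,E)=0\}$ under finite extensions and summands, whereas the paper routes the same fact through $\S\subseteq\P={}^\perp\E$ and $\E\subseteq({}^\perp\E)^\perp=\P^\perp\subseteq\S^\perp$.
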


\begin{proof}
Let $\P$ be the class of DG-projective DG-modules, and $\E$ the class of exact DG-modules (i.e.~with trivial homology).
  From \Lref{DGhomology} (and by using shift $\Sigma$) we have $\S^\perp\subset\E,$  and from \Lref{dgproj} we have $\P={}^\perp\E.$
  Now since $\S\subset\P$ we have  $\E\subset({}^\perp\E)^\perp=\P^\perp\subset\S^\perp$, and hence  $({}^\perp(\S^\perp),\S^\perp)=(\P,\E)$. This shows that the abstract notions of semi-projec\-ti\-vi\-ty and acyclicity agree with the corresponding DG notions. Completeness of the cotorsion pair $(\P,\E)$ follows from \Pref{completecotorsion}, as already mentioned in Definition~\ref{projacycdef}. It remains to see that the abstract notion of semi-flatness agrees with the corresponding DG notion. It must be shown that if $M$ is a left DG $A$-module that satisfies $E \otimes_A M \in 1^\perp$, i.e.~$\Ext^1_{\Ch(\Ab)}(\Z,E \otimes_A M)=0$, for all acyclic right DG $A$-modules $E$, then $E \otimes_A M$ has trivial homology for all such $E$'s. However, by \Lref{DGhomology} we have $\Ext^1_{\Ch(\Ab)}(\Z,E \otimes_A M)=H_{-1}(E \otimes_A M)$, so the conclusion follows as $- \otimes_A M$ preserves shifts. The last statement in the theorem follows from \Tref{concretemodelexample}(2); cf.~the discussion in \ref{subsectionDGMOD}.
\end{proof}

\begin{remark}
The cotorsion pair is well-known. It is one of the cotorsion pairs corresponding (via Hovey \cite[Thm 2.2]{MR1938704})
to the standard projetive model structure on $A$-DGMod (see for instance Keller \cite[Thm 3.2]{keller06}).
That every \mbox{$S\in\dlim\S$} is semi-flat follows directly from results in \cite{avramov},
where it is proved that any semi-projective is semi-flat and that the semi-flats are closed under direct limits.
That every semi-flat can be realized as a direct limit of finitely genereated semi-free/projectives is, to the best of my knowledge, new. 
\end{remark}

\subsection{Ch\texorpdfstring{$(A)$}{(A)}}
In the case of complexes over a ring $A$ a direct calculation using the dual basis theorem component-wise,
shows that the dualizable objects in $\Ch(A)$ are precisely the perfect complexes. From above we thus have:

\begin{cor}
  Let $A$ be any ring and let $\S$ be the class of perfect $A$-complexes. In the category $\Ch(A)$, the cotorsion pair generated by $\S$ is complete and it is given by (semi-projective complexes, acyclic complexes). The direct limit closure of $\S$ is the class of semi-flat complexes. \qed
\end{cor}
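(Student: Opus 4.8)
The plan is to identify the dualizable objects in $\Ch(A)$ explicitly and then invoke the machinery already assembled for $\C_0 = \Ch(\Ab)$. First I would pin down $\C_0$, $1$, and $\S$: here $\C_0 = \Ch(\Ab)$ with unit $1 = \Z$ concentrated in degree zero, which is $\FP_2$ (not projective), and $A$ is a monoid in $\C_0$, i.e.~a DGA that happens to be concentrated-in-homology only insofar as it is an ordinary ring viewed as a complex in degree zero; in this case ${}_A\C = \Ch(A)$. The heart of the argument is the claim that the dualizable objects in $\Ch(A)$ are exactly the perfect complexes (bounded complexes of finitely generated projectives). I would prove this by a componentwise application of the Dual Basis Theorem, exactly as in the $A$-Mod case of \ref{finitedef}: a complex $X$ is dualizable iff there exist finitely many $f_i \in X^*$ and $x_i \in X$ witnessing the identity via $\eta'$, and unwinding this degree-by-degree forces $X$ to be bounded with each $X_n$ finitely generated projective over $A$.

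Once dualizability is characterized, I would verify the hypotheses of \Tref{concretemodelexample}. The set $\S = \{\Sigma^i M(\Id_1)\}_{i\in\Z}$ of shifts of the mapping cone of $\Id_1$ generates $\C_0$, is self-dual, and consists of dualizable objects, so $\C_0$ is generated by dualizables with $\S^* = \S$. The perfect complexes form a collection of dualizable objects in ${}_A\C$ closed under extensions (by \Lref{finiteext}) and containing $A \otimes_\Z \S$, so \Sref{setup1} holds. Since $1 = \Z$ is finitely presented and indeed $\FP_2$ (its $\Ch(\Ab)$-Ext functor respects direct limits), \Sref{setup2} holds as well, and \Tref{concretemodelexample}(2) applies directly.

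It then remains to match the abstract notions of \Dref{projacycdef} with the classical ones for complexes, namely to show the cotorsion pair generated by $\S$ is (semi-projective complexes, acyclic complexes) and that abstract semi-flatness coincides with the usual semi-flatness of complexes. This parallels \Tref{thmDG} line for line: one shows $\S^\perp$ is the class of acyclic complexes and ${}^\perp(\text{acyclic}) = \P$ is the semi-projectives, using the analogue of \Lref{DGhomology} computing $\Ext^1_{\Ch(A)}(\Sigma A, N) \cong H_0(N)$ via the mapping-cone sequence, together with \Lref{dgproj}. For semi-flatness, the key identity $\Ext^1_{\Ch(\Ab)}(\Z, E \otimes_A M) \cong H_{-1}(E \otimes_A M)$ shows that the abstract acyclicity-preservation condition $E \otimes_A M \in 1^\perp$ is equivalent to $E \otimes_A M$ having trivial homology.

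The main obstacle I anticipate is the componentwise verification that dualizable complexes are precisely the perfect ones: the Dual Basis characterization from \ref{finitedef} must be translated through the total-tensor and internal-hom of $\Ch(\Ab)$, and one must check that the existence of the single witnessing morphism $\eta' \colon 1 \to X^* \otimes_A X$ genuinely forces boundedness of $X$ as well as finite generation and projectivity in each degree, rather than merely degreewise dualizability. Everything downstream is then a faithful transcription of the DG-module arguments in \ref{subsectionDGMOD} and \Tref{thmDG}, with the honest ring $A$ in place of a general DGA, so those steps I expect to be routine.
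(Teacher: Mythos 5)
Your proposal is correct and follows essentially the same route as the paper: the paper's (one-line) proof likewise identifies the dualizable objects of $\Ch(A)$ with the perfect complexes by a componentwise dual basis calculation and then appeals to the already-established DG-module results, i.e.\ \Tref{thmDG} (via \Tref{concretemodelexample}, \Lref{dgproj}, and \Lref{DGhomology}) applied with the ring $A$ viewed as a DGA concentrated in degree zero. The only difference is presentational: you re-transcribe the verification of Setups \ref{setup1}--\ref{setup2} and the matching of abstract and classical notions, whereas the paper simply cites ``from above,'' correctly flagging, as you do, that the one genuinely new step is showing $\eta'$ forces boundedness and degreewise finite generation and projectivity.
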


\begin{remark}
  This cotorsion pair has already been studied for instance in \cite{rozas99} where
2.3.5 and 2.3.6 proves it is a cotorsion pair, and 2.3.25 that it is complete (with slightly different notation).
It is not mentioned, however, that it is generated by a set.
As already mentioned in the Introduction, the direct limit closure has in this case been worked out in \cite{holm14}.
\end{remark}

\subsection{QCoh(X)}
\label{subsection:QCoh}
Let $X$ be any scheme and let $\QCoh(X)$ be the category of quasi-coherent sheaves (of $\mathscr{O}_X$-modules) on $X$. This is an abelian and a symmetric~mo\-noi\-dal subcategory of $\Mod(X)$  (the category of all sheaves on $X$), see \cite[II Prop.~5.7]{Hartshorne} and \cite[Tag 01CE]{stacks-project}. It is also a Grothendieck category, indeed, most of the relevant properties of $\QCoh(X)$ go back to Grothendieck \cite{MR0102537,EGAI}; the existence of a generator is an unpublished result by Gabber (1999), see \cite[Tag 077K]{stacks-project} and Enochs and Estrada \cite{MR2139915} for a proof. The  symmetric monoidal category $\QCoh(X)$ is also closed: as explained in \cite[3.7]{MR2419383}, the internal hom in $\QCoh(X)$ is constructed from that in $\Mod(X)$ composed with the quasi-coherator (the right adjoint of the inclusion $\QCoh(X) \to \Mod(X)$), which always exists \cite[Tag 077P]{stacks-project}.

The \emph{dualizable} objects in $\QCoh(X)$ are also studied in Brandenburg \cite[Def. 4.7.1 and Rem. 4.7.2]{brandenburg14}, and
\cite[Prop.~4.7.5]{brandenburg14} shows that they are exactly the locally free sheaves of finite rank. Recall from Sch\"appi~\cite[Def.~6.1.1]{Schappi} (see also \cite[Def.~2.2.7]{brandenburg14}) that a scheme $X$ is said to have the \emph{strong resolution property} if $\QCoh(X)$ s generated by  locally free sheaves of finite rank. This is the case if $X$ is e.g.~a separated noetherian scheme with a family of ample line bundles; see Hovey \cite[Prop.~2.3]{MR1814077} and Krause \cite[Exa.~4.8]{MR2157133}.

An object $M \in \QCoh(X)$ is \emph{semi-flat} if it is so in the sense of \Dref{projacycdef}, that is, if the functor 
\mbox{$-\otimes_{\mathscr{O}_X} M$} is exact and 
\smash{$\Ext^1_{\QCoh(X)}(\mathscr{O}_X,N \otimes_{\mathscr{O}_X} M)=0$} holds for all $N \in \QCoh(X)$ for which \smash{$\Ext^1_{\QCoh(X)}(S,N)=0$} for all locally free sheaves $S$ of finite rank. Now, from Corollary~\ref{cor-concretemodelexample} we get:

\begin{prop}\label{sheaves}
  Let $(X,\mathscr{O}_X)$ be a scheme with the strong resolution property.
\begin{enumerate}
\item If $\mathscr{O}_X$ is $\FP_1$, then every semi-flat object in $\QCoh(X)$ is a direct limit of locally free sheaves of finite rank.

\item If $\mathscr{O}_X$ is $\FP_2$ then, conversely, every direct limit in $\QCoh(X)$ of locally free sheaves of finite rank is semi-flat. \qed
\end{enumerate} 
\end{prop}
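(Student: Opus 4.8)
The plan is to derive Proposition~\ref{sheaves} as a direct application of Corollary~\ref{cor-concretemodelexample}, which applies to any closed symmetric monoidal Grothendieck category generated by a set of dualizable objects. The essential work is therefore to verify that $\QCoh(X)$, under the stated hypotheses, fits this framework and to unpack what the $\FP_1$ and $\FP_2$ assumptions on $\mathscr{O}_X$ give us. First I would record that $\QCoh(X)$ is a closed symmetric monoidal Grothendieck category with tensor unit $\mathscr{O}_X$, citing the facts already assembled in the preamble to this subsection (Gabber's generator, the closed structure via the quasi-coherator). The strong resolution property is precisely the statement that $\QCoh(X)$ is generated by the set $\S$ of locally free sheaves of finite rank, and by Brandenburg's result quoted above these are exactly the dualizable objects; since the dual of a locally free sheaf of finite rank is again locally free of finite rank, the generating set consists of dualizable objects as required by Corollary~\ref{cor-concretemodelexample}.

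Next I would handle the finiteness hypotheses. Corollary~\ref{cor-concretemodelexample} requires $1=\mathscr{O}_X$ to be finitely presented for part~(1) and $\FP_2$ for part~(2). Here the subtlety is matching the paper's $\FP_n$ terminology to the present statement, which phrases the hypotheses as $\mathscr{O}_X$ being $\FP_1$ and $\FP_2$ respectively. An object being $\FP_1$ (finitely presented in the sense of Definition~\ref{defn:lfp-cat}, i.e.~$\C_0(1,-)$ commuting with filtered colimits) is exactly the input needed to invoke part~(1) of Corollary~\ref{cor-concretemodelexample}, which yields that every semi-flat object lies in $\dlim\S$; and $\FP_2$ (so that $\Ext^1_{\C_0}(1,-)$ respects direct limits, per Setup~\ref{setup2}) is the input for part~(2), giving the converse that every direct limit of objects of $\S$ is semi-flat. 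So for part~(1) I apply Corollary~\ref{cor-concretemodelexample}(1) and observe that $\dlim\S$ is by definition the class of direct limits of locally free sheaves of finite rank; for part~(2) I apply Corollary~\ref{cor-concretemodelexample}(2), noting that the hypothesis $\FP_2$ implies $\FP_1$ so that the monoidal unit is in particular finitely presented, as the corollary demands.

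The one genuinely delicate point — and the step I expect to be the main obstacle — is confirming that the notion of semi-flatness appearing in Proposition~\ref{sheaves} coincides with the abstract semi-flatness of Definition~\ref{projacycdef} as transported through Corollary~\ref{cor-concretemodelexample}. This amounts to checking that the cotorsion pair generated by $\S$ has the expected $\E=\S^\perp$, so that the condition ``$-\otimes_{\mathscr{O}_X}M$ is exact and $\Ext^1_{\QCoh(X)}(\mathscr{O}_X,N\otimes_{\mathscr{O}_X}M)=0$ for all $N\in\S^\perp$'' is literally the semi-flatness predicate fed into the theorem. Fortunately this identification was already spelled out in the paragraph immediately preceding the proposition, where semi-flatness in $\QCoh(X)$ was defined to be semi-flatness in the sense of Definition~\ref{projacycdef}; so the obstacle is really just bookkeeping rather than new mathematics. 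Having matched the definitions, both parts of the proposition follow formally, which is why the statement can be closed with \qed in the excerpt with no further argument beyond the invocation of the corollary.
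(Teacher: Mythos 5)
Your proposal is correct and matches the paper's own treatment: the result is stated with \qed precisely because it follows immediately from Corollary~\ref{cor-concretemodelexample}, with all the verifications you carry out (Grothendieck and closed monoidal structure on $\QCoh(X)$, the strong resolution property giving a generating set of dualizable objects via Brandenburg's identification, and the definitional match of semi-flatness) already assembled in the paragraphs preceding the proposition, exactly as you note. Your matching of $\FP_1$ with ``finitely presented'' and $\FP_2$ with the condition of Setup~\ref{setup2} is also the intended reading.
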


\begin{remark}
  \label{Qcoh-vs-Mod}
  It follows from \cite[II Thm.~7.18]{MR0222093} that if $X$ is locally noetherian, then every injective object in $\QCoh(X)$ is also injective in $\Mod(X)$. Thus, in this case one has $\Ext^i_{\QCoh(X)}(M,N) \cong \Ext^i_{\Mod(X)}(M,N)$ for all $M,N \in \QCoh(X)$.
\end{remark}

\begin{thm}
  \label{sheaves-noetherian}
  Let $X$ be a noetherian scheme with the strong resolution property. In the category $\QCoh(X)$, the direct limit closure of the locally free sheaves of finite rank is precisely the class of semi-flat sheaves.
\end{thm}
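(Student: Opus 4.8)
The plan is to deduce the theorem directly from \Pref{sheaves}: its two parts together assert exactly the claimed equality, provided the unit $\mathscr{O}_X$ is $\FP_1$ and $\FP_2$. So the entire proof reduces to verifying these two finiteness conditions on $\mathscr{O}_X$ in $\QCoh(X)$ under the noetherian hypothesis, where $\FP_2$ means that $\Ext^1_{\QCoh(X)}(\mathscr{O}_X,-)$ respects direct limits and $\FP_1$ that $\mathscr{O}_X$ is finitely presented.

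The first step is to reinterpret $\Ext_{\QCoh(X)}(\mathscr{O}_X,-)$ as sheaf cohomology. Since $X$ is noetherian, hence locally noetherian, \Rref{Qcoh-vs-Mod} supplies natural isomorphisms $\Ext^i_{\QCoh(X)}(\mathscr{O}_X,N) \cong \Ext^i_{\Mod(X)}(\mathscr{O}_X,N)$ for every $N \in \QCoh(X)$. As $\mathscr{O}_X$ is the unit, $\operatorname{Hom}_{\Mod(X)}(\mathscr{O}_X,-)$ is the global sections functor $\Gamma(X,-)$, so its right derived functors are the sheaf cohomology groups; that is, $\Ext^i_{\Mod(X)}(\mathscr{O}_X,-) \cong H^i(X,-)$. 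Combining the two, I obtain $\Ext^i_{\QCoh(X)}(\mathscr{O}_X,-) \cong H^i(X,-)$ for $i = 0,1$.

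Next I would bring in the behaviour of cohomology under direct limits. The inclusion $\QCoh(X) \hookrightarrow \Mod(X)$ has the quasi-coherator as a right adjoint, hence preserves colimits, so direct limits in $\QCoh(X)$ are computed in $\Mod(X)$. Because $X$ has noetherian underlying space, sheaf cohomology commutes with direct limits of sheaves by \cite[III Prop.~2.9]{Hartshorne}. Therefore both $\operatorname{Hom}_{\QCoh(X)}(\mathscr{O}_X,-) \cong H^0(X,-)$ and $\Ext^1_{\QCoh(X)}(\mathscr{O}_X,-) \cong H^1(X,-)$ respect direct limits, which says precisely that $\mathscr{O}_X$ is $\FP_1$ and $\FP_2$.

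With $\mathscr{O}_X$ shown to be both $\FP_1$ and $\FP_2$, \Pref{sheaves}(1) gives that every semi-flat sheaf lies in the direct limit closure of the locally free sheaves of finite rank, and \Pref{sheaves}(2) gives the reverse inclusion; together they yield the stated equality of classes. I expect the only delicate point to be the identification $\Ext^i_{\QCoh(X)}(\mathscr{O}_X,-) \cong H^i(X,-)$, which genuinely needs the noetherian hypothesis through \Rref{Qcoh-vs-Mod}: without it, injectives in $\QCoh(X)$ need not remain injective in $\Mod(X)$, and the comparison of the two Ext-groups can break down. Everything else is formal, resting on the adjunction between $\QCoh(X)$ and $\Mod(X)$ and on Grothendieck's theorem relating cohomology to direct limits over noetherian spaces.
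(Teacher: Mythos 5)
Your proposal is correct and follows essentially the same route as the paper's proof: identify $\Ext^i_{\QCoh(X)}(\mathscr{O}_X,-)$ with $H^i(X,-)$ via Remark~\ref{Qcoh-vs-Mod} (the paper cites \cite[III Prop.~6.3(c)]{Hartshorne} for the second identification you derive directly), invoke \cite[III Prop.~2.9]{Hartshorne} on the noetherian space, note that colimits in $\QCoh(X)$ are computed in $\Mod(X)$ (you justify this by the quasi-coherator adjunction, the paper by citation), and conclude $\FP_1$ and $\FP_2$ so that Proposition~\ref{sheaves} applies. The argument is complete as written.
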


\begin{proof}
  As $X$ is, in particular, a locally noetherian scheme, Remark~\ref{Qcoh-vs-Mod} and \cite[III Prop.~6.3(c)]{Hartshorne} shows that \smash{$\Ext^i_{\QCoh(X)}(\mathscr{O}_X,-) \cong H^i(X,-)$} for all $i \geqslant 0$. If we view $H^i(X,-)$ as a functor $\Mod(X) \to \Ab$, then it preserves direct limits by \cite[III Prop.~2.9]{Hartshorne} as $X$ is a noetherian scheme (see also \cite[III 3.1.1]{Hartshorne}). But then $H^i(X,-)$ also preserves direct limits as a functor $\QCoh(X) \to \Ab$ since colimits in $\QCoh(X)$ are just computed in $\Mod(X)$, see \cite[Tag 01LA]{stacks-project}. We conclude that $\mathscr{O}_X$ is both $\FP_1$ and $\FP_2$ and the desired conclusion follows from \Pref{sheaves}.
\end{proof}

This is not the first Lazard-like theorem for quasi-coherent sheaves. The usual notion of flatness is \emph{locally flat}, which means that the stalks are flat. Such sheaves are tensor-flat, and the converse holds if the scheme is quasi-separated \cite[Lem.~4.6.2]{brandenburg14}.

In \cite[(5.4)]{boevey94} Crawley-Boevey proves that $\dlim\S$ is precisely the locally flat sheaves if $X$ is a non-singular irreducible curve or surface over a field $k$.
 
In \cite[2.2.4]{brandenburg14} Brandenburg proves that if $X$ has the strong resolution property and $M$ is locally flat and
Spec$($Sym$(M))$ is affine, then $M\in\dlim\S.$

Thus for a scheme with the strong resolution property we have the relations:
$$
\xymatrix@C=0.5pc{
&\text{Locally flat + Spec$($Sym$(-))$ affine}\ar@{=>}[d]  \\
\text{Semi-flat} \ar@{<=>}[r]^-{\text{noetherian}}
&\dlim S\ar@<-.6ex>@{=>}[d]\ar@<.6ex>@{<=}[d]^{\text{non-singular irreducible curve or surface}} \\ 
&\text{Locally flat} \ar@<-.6ex>@{=>}[d]\ar@<.6ex>@{<=}[d]^{\text{quasi-seperated}} \\
&\text{Tensor-flat}
}
$$

It would be interesting to get a concrete description of the
semi-flat, the acyclic and the semi-projective objects in $\QCoh(X)$.

Some work has been done in this direction.
In Enochs, Estrada, and Garc{\'{\i}}a-Rozas \cite[3.1]{estrada08} we see that the semi-projective objects are locally projective,
and in \cite[4.2]{estrada08} we see that they are precisely the locally projective sheaves in the special case of $P_1(k)$ (the projective line over an algebraically closed field $k$).
In this case a concrete computational description of the (abstract) acyclic objects are given. I am not aware of anybody explicitly studying semi-flat sheaves.

\subsection{Additive functors}
\label{subsection:additive-functors}
Following \cite{oberst70}, let \mbox{$\C_0=\Ab$}, let $\X$ be a small preadditive category, let $\X^\mathrm{op}$ be the dual category, 
let $\C_L=[\X,\Ab]$ and $\C_R=[\X^\mathrm{op},\Ab]$ be the categories of additive functors, and let $\S$ be the class of finite direct sums of representable functors (recall that the \emph{representable} functors in $\C_L$ and $\C_R$ are the functors $\X(x,-)$ and $\X(-,x)$ where \mbox{$x\in\X$}). We
define $\X(-,x)^*=\X(x,-)$ and vice versa. As in \cite{oberst70} one can define a tensorproduct
$$\otimes_\X\colon [\X^\mathrm{op},\Ab]\times [\X,\Ab]\longrightarrow\Ab.$$
We claim that these data satisfy \Sref{setup1}: The categories $\C_L$ and $\C_R$ are Grothen\-dieck and generated by $\S$; see~\cite[Lem. 2.4]{oberst70}. Note that $\S$ is small as $\X$ is small. Furthermore, $\S$ is closed under extensions; indeed
the objects in $\S$ are projective (in fact, every projective object is a direct summand of an object from $\S$), hence any extension is a direct sum. If $\X$ is additive, then $\X(-,x)\oplus\X(-,y)\cong\X(-,x\oplus y)$, so in this case $\S$ is just the class of representable functors (finite direct sums are not needed). Further, as in \cite{oberst70} the tensor product is such that for any $F\in\C_L$ and $G\in\C_R$ we have
\begin{equation*}
\X(-,x)\otimes_\X F\cong Fx \quad \text{ and } \quad G\otimes_\X\X(x,-)\cong Gx\;,
\end{equation*}
which by the Yoneda lemma, and the fact that  $\Ab(1,-)$ is the identity gives the required isomorphisms from \Sref{setup1}. As the functors $\X(-,x)\otimes_\X?$ and $?\otimes_\X\X(x,-)$ are nothing but evaluation at $x$, they are exact. Finally, as $1=\Z\in\Ab$ is projective, \Coref{stronglimcor} gives a new proof of \cite[Thm 3.2]{oberst70}:

\begin{cor}
Let $\X$ be an additive category, and let $\S$ be the finitely generated projective functors or the representable functors in $[\X,\Ab]$
(or the direct sums of representable functors if $\X$ is only preadditive).
A functor $F$ is flat iff $F\in\dlim\S.$
\end{cor}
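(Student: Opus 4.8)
The plan is to deduce the Corollary as a direct instantiation of \Coref{stronglimcor}, since all the heavy lifting has already been carried out in \Tref{stronglim} and its corollary; here one only needs to confirm that the category of additive functors fits the axiomatic pattern. First I would verify that the data $\C_L = [\X,\Ab]$, $\C_R = [\X^{\mathrm{op}},\Ab]$, $\C_0 = \Ab$, the tensor product $\otimes_\X$, the duality $(-)^*$, and the class $\S$ satisfy \Sref{setup1}. This is exactly the content of the preceding discussion: the categories $\C_L$ and $\C_R$ are Grothendieck and generated by the small class $\S$, which is closed under extensions because its members are projective (so any extension splits into a direct sum); and the natural isomorphisms $\C_0(1,S^* \otimes_\X -) \cong \C_L(S,-)$ required by \Sref{setup1} follow from the identifications $\X(-,x) \otimes_\X F \cong Fx$ and $G \otimes_\X \X(x,-) \cong Gx$ via the Yoneda lemma, using that $\Ab(1,-)$ is the identity.

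Next I would check the two extra hypotheses of \Coref{stronglimcor}. The unit $1 = \Z$ is projective in $\Ab$, which disposes of the first. For the second, every $S \in \S$ must be tensor-flat in the sense of \Dref{projacycdef}, that is, $- \otimes_\X S$ must be exact. Since $\S$ consists of finite direct sums of representable functors and the functors $\X(-,x) \otimes_\X ?$ and $? \otimes_\X \X(x,-)$ are nothing but evaluation at $x$ — which is exact, as limits and colimits in functor categories are computed pointwise — and since a finite direct sum of exact functors is again exact, tensoring with any $S \in \S$ is exact. Hence every $S \in \S$ is tensor-flat.

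With both hypotheses confirmed, \Coref{stronglimcor} yields that $\dlim \S$ is precisely the class of tensor-flat objects in $\C_L$ (and symmetrically in $\C_R$). The only remaining point is to match this with the classical notion of a flat functor: a functor $F$ is flat exactly when $- \otimes_\X F$ is exact, which is verbatim the definition of tensor-flat. I expect this identification of the Oberst--R{\"o}hrl flatness condition with the abstract tensor-flatness of \Dref{projacycdef} to be the only step requiring any attention, and it is essentially tautological in this setting. The equivalence ``$F$ is flat iff $F \in \dlim \S$'' then follows at once, reproving \cite[Thm 3.2]{oberst70}.
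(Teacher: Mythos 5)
Your proposal is correct and follows essentially the same route as the paper: verify \Sref{setup1} via the identifications $\X(-,x)\otimes_\X F\cong Fx$ and $G\otimes_\X\X(x,-)\cong Gx$ (Yoneda plus $\Ab(1,-)=\Id$), note that $1=\Z$ is projective and that tensoring with objects of $\S$ is evaluation, hence exact, and then invoke \Coref{stronglimcor}, identifying Oberst--R{\"o}hrl flatness with tensor-flatness. The only cosmetic omission is that for the ``finitely generated projective functors'' variant of $\S$ one should remark that these are exactly the direct summands of objects of $\S$ and that $\dlim\S$ is closed under direct summands by \Lref{limclosed}, which the paper likewise leaves implicit.
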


\section*{Acknowledgements}

I would like to thank my advisor Henrik Holm for suggesting the topic and~for his guidance. I thank Sergio Estrada for discussing and providing references for the case of quasi-coherent sheaves. I thank Luchezar L. Avramov for making available his unpublished manuscript on differential graded homological algebra, joint with 
Hans-Bj{\o}rn Foxby and Stephen Halperin, which has served as a key source of inspiration. Finally, it is a pleasure to thank the anonymous referee for his/her thorough and insightful comments that greatly improved the manuscript and strengthened \Tref{concretemodelexample} (and its applications) significantly.

\bibliographystyle{amsplain}
\bibliography{bibliography}

\providecommand{\bysame}{\leavevmode\hbox to3em{\hrulefill}\thinspace}
\providecommand{\MR}{\relax\ifhmode\unskip\space\fi MR }
\providecommand{\MRhref}[2]{%
  \href{http://www.ams.org/mathscinet-getitem?mr=#1}{#2}
}
\providecommand{\href}[2]{#2}
\begin{thebibliography}{10}

\bibitem{MR2419383}
L.~Alonso~Tarr{\'\i}o, A.~Jerem{\'\i}as~L\'opez, M.~P{\'e}rez~Rodr{\'\i}guez,
  and M.~J. Vale~Gonsalves, \emph{The derived category of quasi-coherent
  sheaves and axiomatic stable homotopy}, Adv. Math. \textbf{218} (2008),
  no.~4, 1224--1252.

\bibitem{avramov}
L.~Avramov, H.~Foxby, and S.~Halperin, \emph{Differential graded homological
  algebra}, 1994–2014, preprint.

\bibitem{brandenburg14}
M.~Brandenburg, \emph{Tensor categorical foundations of algebraic geometry},
  2014, preprint, arXiv:1410.1716.

\bibitem{breit79}
S.~Breitsprecher, \emph{Lokal endlich pr{\"a}sentierbare
  {G}rothendieck-{K}ategorien}, Mitt. Math. Sem. Giessen Heft \textbf{85}
  (1970), 1--25.

\bibitem{holm14}
L.~W. Christensen and H.~Holm, \emph{The direct limit closure of perfect
  complexes}, J. Pure Appl. Algebra \textbf{219} (2015), no.~3, 449--463.

\bibitem{boevey94}
W.~Crawley-Boevey, \emph{Locally finitely presented additive categories}, Comm.
  Algebra \textbf{22} (1994), no.~5, 1641--1674.

\bibitem{MR2139915}
E.~E. Enochs and S.~Estrada, \emph{Relative homological algebra in the category
  of quasi-coherent sheaves}, Adv. Math. \textbf{194} (2005), no.~2, 284--295.

\bibitem{estrada08}
E.~E. Enochs, S.~Estrada, and J.~R. Garc{\'{\i}}a-Rozas, \emph{Locally
  projective monoidal model structure for complexes of quasi-coherent sheaves
  on {$\bold P\sp 1(k)$}}, J. Lond. Math. Soc. (2) \textbf{77} (2008), no.~1,
  253--269.

\bibitem{rozas99}
J.~R. Garc{\'{\i}}a~Rozas, \emph{Covers and envelopes in the category of
  complexes of modules}, Chapman \& Hall/CRC Res. Notes Math., vol. 407,
  Chapman \& Hall/CRC, Boca Raton, FL, 1999.

\bibitem{trlifaj06}
R.~G{\"o}bel and J.~Trlifaj, \emph{Approximations and endomorphism algebras of
  modules}, de Gruyter Exp. Math., vol.~41, Walter de Gruyter GmbH \& Co. KG,
  Berlin, 2006.

\bibitem{gov65}
V.~E. Govorov, \emph{On flat modules}, Sibirsk. Mat. \v Z. \textbf{6} (1965),
  300--304.

\bibitem{MR0102537}
A.~Grothendieck, \emph{Sur quelques points d'alg\`ebre homologique}, T\^ohoku
  Math. J. (2) \textbf{9} (1957), 119--221.

\bibitem{EGAI}
A.~Grothendieck and J.~A. Dieudonn\'e, \emph{El\'ements de g\'eom\'etrie
  alg\'ebrique. {I}}, Grundlehren Math. Wiss., vol. 166, Springer-Verlag,
  Berlin, 1971.

\bibitem{MR0222093}
R.~Hartshorne, \emph{Residues and duality}, Lecture notes of a seminar on the
  work of A. Grothendieck, given at Harvard 1963/64. With an appendix by P.
  Deligne. Lecture Notes in Math., No. 20, Springer-Verlag, Berlin-New York,
  1966.

\bibitem{Hartshorne}
\bysame, \emph{Algebraic geometry}, Springer-Verlag, New York-Heidelberg, 1977,
  Graduate Texts in Mathematics, No. 52.

\bibitem{HJ}
H.~Holm and P.~J{\o}rgensen, \emph{Cotorsion pairs in categories of quiver
  representations}, 2016, Kyoto J. Math. (to appear), arXiv:1604.01517v2.

\bibitem{MR1814077}
M.~Hovey, \emph{Model category structures on chain complexes of sheaves},
  Trans. Amer. Math. Soc. \textbf{353} (2001), no.~6, 2441--2457.

\bibitem{MR1938704}
\bysame, \emph{Cotorsion pairs, model category structures, and representation
  theory}, Math. Z. \textbf{241} (2002), no.~3, 553--592. \MR{1938704}

\bibitem{HPS97}
M.~Hovey, J.~H. Palmieri, and N.~P. Strickland, \emph{Axiomatic stable homotopy
  theory}, Mem. Amer. Math. Soc. \textbf{128} (1997), no.~610, x+114.

\bibitem{keller06}
B.~Keller, \emph{On differential graded categories}, International {C}ongress
  of {M}athematicians. {V}ol. {II}, Eur. Math. Soc., Z\"urich, 2006,
  pp.~151--190.

\bibitem{MR2157133}
H.~Krause, \emph{The stable derived category of a {N}oetherian scheme}, Compos.
  Math. \textbf{141} (2005), no.~5, 1128--1162.

\bibitem{lazard68}
D.~Lazard, \emph{Autour de la platitude}, Bull. Soc. Math. France \textbf{97}
  (1969), 81--128.

\bibitem{equiva}
L.~G. Lewis, J.~P. May, and M.~Steinberger, \emph{Equivariant stable homotopy
  theory}, Lecture Notes in Math., vol. 1213, Springer-Verlag, Berlin, 1986,
  With contributions by J. E. McClure.

\bibitem{Northcott}
D.~G. Northcott, \emph{A first course of homological algebra}, Cambridge
  University Press, London, 1973.

\bibitem{oberst70}
U.~Oberst and H.~R{\"o}hrl, \emph{Flat and coherent functors}, J. Algebra
  \textbf{14} (1970), 91--105.

\bibitem{pareigis86}
B.~Pareigis, \emph{Non-additive ring and module theory. {I}. {G}eneral theory
  of monoids}, Publ. Math. Debrecen \textbf{24} (1977), no.~1-2, 189--204.

\bibitem{salce79}
L.~Salce, \emph{Cotorsion theories for abelian groups}, Symposia {M}athematica,
  {V}ol. {XXIII} ({C}onf. {A}belian {G}roups and their {R}elationship to the
  {T}heory of {M}odules, {INDAM}, {R}ome, 1977), Academic Press, London-New
  York, 1979, pp.~11--32.

\bibitem{stovicek11}
M.~Saor{\'{\i}}n and J.~{\v{S}}{\v{t}}ov{\'{\i}}{\v{c}}ek, \emph{On exact
  categories and applications to triangulated adjoints and model structures},
  Adv. Math. \textbf{228} (2011), no.~2, 968--1007.

\bibitem{Schappi}
D.~Sch\"appi, \emph{A characterization of categories of coherent sheaves of
  certain algebraic stacks}, 2012, to appear in J. Pure Appl. Algebra,
  arXiv:1206.2764.

\bibitem{stacks-project}
The {Stacks Project Authors}, \emph{Stacks project},
  \url{http://stacks.math.columbia.edu}, 2017.

\bibitem{Stenstrom}
B.~Stenstr\"om, \emph{Rings of quotients}, Grundlehren Math. Wiss., vol. 217,
  Springer-Verlag, New York-Heidelberg, 1975.

\bibitem{stovicek13}
J.~{\v{S}}{\v{t}}ov{\'{\i}}{\v{c}}ek, \emph{Deconstructibility and the {H}ill
  lemma in {G}rothendieck categories}, Forum Math. \textbf{25} (2013), no.~1,
  193--219.

\end{thebibliography}

\end{document}